\newcommand{\sig}{\sigma}
\newcommand{\Sig}{\Sigma}
\newcommand{\ra}{\rightarrow}
\newcommand{\eps}{\epsilon}
\newcommand{\Gam}{\Gamma}
\newcommand{\lam}{\lambda}
\newcommand{\ben}{\begin{enumerate}}
\newcommand{\een}{\end{enumerate}}
\allowdisplaybreaks \setlength{\textwidth}{6.5in}
\numberwithin{equation}{section}
\newtheorem{thm}{Theorem}[section]
\newtheorem{lem}[thm]{Lemma}
\newtheorem{cor}[thm]{Corollary}
\newtheorem{notation}[thm]{Notations}
\newtheorem{example}[thm]{Example}
\theoremstyle{definition}
\newtheorem{defn}[thm]{Definition}
\theoremstyle{remark}
\newtheorem{rem}[thm]{Remark}
\title{First Order Probabilities for Galton-Watson Trees}
\date{}
\author{Moumanti Podder}
\address{Moumanti Podder, \ Courant Institute of Mathematical Sciences, \ New York University, \ 251 Mercer Street, New York, NY 10012, United States.}
\email{mp3460@nyu.edu.}
\author{Joel Spencer}
\address{Joel Spencer, \ Professor, Computer Science and Mathematics Depts, \ Courant Institute of Mathematical Sciences, \ New York University, \ 251 Mercer Street, New York, NY 10012, United States.}
\email{spencer@cims.nyu.edu.}
\begin{document}
\bibliographystyle{plainnat}

\newpage

\begin{abstract}
\emph{In the regime of Galton-Watson trees, first order logic statements are roughly equivalent to 
examining the presence of specific finite subtrees. We consider the space of all 
trees with $Poisson$ offspring distribution and show that such finite subtrees will be almost surely 
present when the tree is infinite. Introducing the notion of universal trees, we show that 
all first order sentences of quantifier depth $k$ depend only on local neighbourhoods of the root of 
sufficiently large radius depending on $k$. We compute the probabilities of these neighbourhoods 
conditioned on the tree being infinite.  We give an almost sure theory for infinite trees.}
\end{abstract}

\subjclass[2010]{Primary05C20; Secondary60F20.}

\keywords{Galton-Watson trees, almost sure theory, first order logic.}

\maketitle

\section{Introduction and main results}

For $\lam > 0$ we let $T_{\lam}$ denote the standard Galton-Watson tree, in which each node
independently has $Poisson$ offspring with mean $\lam$.  We shall set
\begin{equation}\label{defp}
   p = p(\lam) = \Pr[T_{\lam}\mbox{ is infinite}]. 
\end{equation}
As is well known, when $\lam\leq 1$, $p(\lam)=0$ while when $\lam > 1$, $p$ is
the unique positive solution to the equation
\begin{equation}\label{a30}
1-p = e^{-p\lam}.
\end{equation}

\par We let $T_{\lam}^*$ denote $T_{\lam}$ conditioned on $T_{\lam}$
being infinite.  (When using $T_{\lam}^*$ we tacitly assume $\lam > 1$.)
For any property $A$ of rooted trees we let $\Pr[A], \Pr^*[A]$
denote the probability (as a function of $\lam$) of $A$ in 
$T_{\lam},T_{\lam}^*$ respectively.  

\par The {\em first order logic} for rooted trees consists of equality ($x=y$),
parent ($\pi(x,y)$, meaning $x$ is the parent of $y$), the constant symbol
$R$ (the root), the usual Boolean connectives and existential and universal
quantification over vertices.  A {\em first order property} is a property
that can be written with a sentence $A$ in this language. The quantifier depth of any first order sentence is the number of nested quantifiers involved in expressing the sentence. We illustrate with a few examples what a typical first order sentence looks like.

\begin{example}
Consider the property that there exists a node in the tree that has precisely two children. This can be expressed in first order language as follows:
$$\exists \ u\left[\exists \ v_{1} \left[\exists \ v_{2}\left[\pi(u, v_{1}) \wedge \pi(u, v_{2}) \wedge \left[\forall \ v \left\{\pi(u, v) \implies \left\{\left(v = v_{1}\right) \vee \left(v = v_{2}\right)\right\}\right\}\right]\right]\right]\right].$$
In this particular example, the quantifier depth is $4$.
\end{example}

\begin{example}
Consider the property that the root of the tree has precisely one child and precisely one grandchild. Observe that the root of the tree being a designated symbol, this property is written in first order language as follows:
$$\exists \ u \left[\exists \ v\left[\pi(R,u) \wedge \pi(u,v) \wedge \left[\forall \ u' \left\{\pi(R,u') \implies \left(u' = u\right)\right\}\right] \wedge \left[\forall \ v' \left\{\pi(u,v') \implies \left(v' = v\right)\right\}\right]\right]\right].$$
The quantifier depth is $3$.
\end{example}

We refer the reader to \cite{MR1847951} for further discussion on first order logic.

Our main results (Theorem \ref{a52} and Corollary \ref{a54}) will be a characterization of the possible $\Pr^*[A]$,
as functions of $\lam$, where $A$ is a first order property. However, it can be shown that the property of $T$ being 
infinite is not first order.

\begin{notation}\label{notations}
Let $v\in T$, $T$ a rooted tree.
$T(v)$ denotes the subtree of $T$ that is rooted at $v$. $w$ is an $i$-descendant
of $v$ if there is a sequence $v=x_0,x_1,\ldots,x_i=w$ so that $x_j$ is the
parent of $x_{j+1}$ for $0\leq j < i$.  (We say $v$ is a $0$-descendant of
itself.) (In the Ulam-Harris notation for trees, this can be expressed as $w = (x_0,x_1,\ldots,x_i)$ where $x_{0} = v$ and $x_{i} = w$.) $w$ is a $(\leq i)$-descendant of $v$ if it is a $j$-descendant for
some $0\leq j \leq i$.
(E.g., $3$-descendants are great-grandchildren.)
We define $d(T)$ to be the depth 
of the tree, which may be infinite. For $n\geq 1$, 
$T|_{n}$ denotes the first $n$ generations of $T$, along with the root. 
That is, if $d(T) > n$, then we sever all nodes after the $n$-th generation 
(where root is the $0$-th generation) and call the truncated tree $T|_{n}$. 
If, of course, $d(T) \leq n$, then $T|_{n} = T$.  Let $T_0$ be a finite tree.
We say $T$ contains $T_0$ as a subtree if for some $v\in T$, $T(v)\cong T_0$.
We note that this is a first order property.  Letting $T_0$ have $s$ nodes, the first order 
sentence is that there exist distinct $v_1,\ldots,v_s$ having all the desired
parent relations and with $v_1,\ldots,v_s$ having no additional children.
\end{notation}

\par We use a \emph{fictitious continuation} to analyze $T_{\lam}$.  Let $X_1,X_2,\ldots$ be
a countable sequence of mutually independent and identically distributed $Poisson(\lam)$ random variables. 
Let $X_i$ be the number of children of the $i$-th node, when the tree is explored using Breadth First
Search.  (The root is considered the first node so that $X_1$ is its number of children.)  If and when 
the tree terminates (this occurs when $\sum_{i=1}^n X_i = n-1$ for the first time) the remaining (fictitious) 
$X_j$ are not used.

\begin{thm} \label{any finite tree is almost surely present}
Fix an arbitrary finite tree $T_{0}$. Consider the following statement $A$: 
\begin{equation} \label{the statement}
A := \{\text{either } T \text{ contains } T_{0} \text{ as a subtree or } T \text{ is finite}\}.
\end{equation}
Then $\Pr[A]=1$. 
\end{thm}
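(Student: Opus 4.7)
The plan is to reduce the claim to showing that $\Pr[T_\lam \text{ is infinite and contains no copy of } T_0] = 0$; if $\lam \leq 1$ the tree is a.s.\ finite and $A$ holds trivially, so I assume $\lam > 1$. Set $q = q(\lam, T_0) := \Pr[T_\lam \cong T_0]$, which is strictly positive since $T_0$ is a fixed finite tree and the Poisson($\lam$) distribution charges every non-negative integer (one could write $q$ as an explicit product over the nodes of $T_0$, but only positivity matters here).

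The key step is to examine the event that $T_0$ fails to appear at a fixed depth $d$. Let $Z_d$ denote the number of vertices of $T_\lam$ at depth $d$, and let $\mathcal{F}_d$ be the $\sigma$-algebra generated by $T_\lam|_d$. By the branching property, conditional on $\mathcal{F}_d$ the subtrees $T_\lam(v)$ with $v$ at depth $d$ are i.i.d.\ copies of $T_\lam$; hence, if $B_d$ denotes the event that none of those $Z_d$ depth-$d$ subtrees is isomorphic to $T_0$, then
\begin{equation*}
\Pr[B_d \mid \mathcal{F}_d] \;=\; (1-q)^{Z_d}.
\end{equation*}
If $T_\lam$ does not contain $T_0$ as a subtree anywhere, then a fortiori no depth-$d$ subtree is isomorphic to $T_0$, so $B_d$ holds for every $d$. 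Consequently it suffices to prove that $\Pr[\{T_\lam \text{ infinite}\} \cap B_d] \to 0$ as $d \to \infty$.

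I would close the argument using the classical fact that for a supercritical Galton-Watson process $Z_d \to \infty$ almost surely on the event of non-extinction; for Poisson offspring this is immediate from the Kesten--Stigum theorem, since $Z_1$ has all moments. This gives $(1-q)^{Z_d}\,\mathbf{1}_{\{T_\lam \text{ infinite}\}} \to 0$ almost surely, and since the integrand is bounded by $1$, dominated convergence yields
\begin{equation*}
\Pr[\{T_\lam \text{ infinite}\} \cap B_d] \;\leq\; E\bigl[(1-q)^{Z_d}\,\mathbf{1}_{\{T_\lam \text{ infinite}\}}\bigr] \;\longrightarrow\; 0,
\end{equation*}
which completes the argument. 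The only genuinely substantive input is the a.s.\ divergence of $Z_d$ on non-extinction; this is the step where most care is needed, though for Poisson offspring it is a standard result that can simply be cited. Everything else is a one-step conditioning on the $d$-th generation, followed by dominated convergence.
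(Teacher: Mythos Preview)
Your overall strategy is sound and far more elementary than the paper's, but the displayed inequality
\[
\Pr[\{T_\lam \text{ infinite}\} \cap B_d] \;\leq\; E\bigl[(1-q)^{Z_d}\,\mathbf{1}_{\{T_\lam \text{ infinite}\}}\bigr]
\]
is false as written. The event $\{T_\lam \text{ infinite}\}$ is not $\mathcal{F}_d$-measurable, and it is in fact \emph{positively} correlated with $B_d$ given $\mathcal{F}_d$: an infinite depth-$d$ subtree is automatically not isomorphic to the finite tree $T_0$. Concretely, conditional on $Z_d=m\ge 1$ one computes $\Pr[\{T_\lam\text{ infinite}\}\cap B_d\mid Z_d=m]=(1-q)^m-(1-p-q)^m$, while $E[(1-q)^{Z_d}\mathbf{1}_{\{T_\lam\text{ infinite}\}}\mid Z_d=m]=(1-q)^m\bigl(1-(1-p)^m\bigr)$, and the former strictly exceeds the latter for every $m\ge 1$.

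The repair is immediate: replace $\mathbf{1}_{\{T_\lam \text{ infinite}\}}$ by the $\mathcal{F}_d$-measurable indicator $\mathbf{1}_{\{Z_d\ge 1\}}$. Since $\{T_\lam\text{ infinite}\}\subseteq\{Z_d\ge 1\}$,
\[
\Pr[\{T_\lam \text{ infinite}\}\cap B_d] \;\le\; \Pr[\{Z_d\ge 1\}\cap B_d] \;=\; E\bigl[\mathbf{1}_{\{Z_d\ge 1\}}\,(1-q)^{Z_d}\bigr],
\]
and now $\mathbf{1}_{\{Z_d\ge 1\}}(1-q)^{Z_d}\to 0$ a.s.\ both on extinction (where eventually $Z_d=0$) and on survival (where $Z_d\to\infty$), so dominated convergence finishes exactly as you intended. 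Incidentally, $Z_d\to\infty$ on non-extinction does not need Kesten--Stigum; the elementary extinction/explosion dichotomy for the Markov chain $(Z_d)$ suffices.

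With this correction your proof is complete and takes a genuinely different route from the paper. The paper deduces Theorem~\ref{any finite tree is almost surely present} from a strictly stronger quantitative result, Theorem~\ref{A rapidly determined}: the event $A$ is \emph{rapidly determined} by the first $s$ variables of a breadth-first fictitious continuation, with failure probability exponentially small in $s$. Establishing that requires a forest extension of the process, a Lipschitz martingale, Azuma's inequality, and Chernoff bounds. Your argument trades all of this machinery for a single classical Galton--Watson fact and a one-line conditioning. What the paper's approach buys is the separate rapid-determination theorem; if one only wants Theorem~\ref{any finite tree is almost surely present} itself, your method is considerably shorter.
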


This is one of the main results of this paper. Note that, in particular, Theorem \ref{any finite tree is almost surely present} immediately implies that for any arbitrary but fixed finite $T_{0}$, 
\begin{equation}
\Pr^{*}[\exists \ v : T(v) \cong T_{0}] = 1.
\end{equation}
This gives us a good structural description of the infinite random Galton-Watson tree, in the sense that every local neighbourhood is almost surely present somewhere inside the tree.

\subsection{Rapidly Determined Properties}

\par We say (employing a useful notion of Donald Knuth) that an event 
is \emph{quite surely determined} in a certain parameter $s$ if the 
probability of the complement of that event is exponentially small in $s$. 
\begin{defn} \label{rapidly determined}
Consider the fictitious continuation process $T_{\lam}$. We say that an event $B$ is \emph{rapidly determined} 
if \emph{quite surely} $B$ is \emph{tautologically determined} by $X_{1}, X_{2}, \ldots, X_{s}$. Here, \emph{tautologically determined} means that for every point $\omega$ in the sample space, the realization $\left(X_{1}(\omega), X_{2}(\omega), \ldots, X_{s}(\omega)\right)$ completely determines whether the event $B$ occurs or not.
This means that for every sufficiently large $s \in \mathbb{N}$,
\begin{equation} \label{a1}
\Pr[B \text{ is not determined by } X_{1}, X_{2}, \ldots, X_{s}] \leq e^{-\beta s}
\end{equation} 
where $\beta > 0$ is independent of $s$.
\end{defn}

\begin{thm} \label{A rapidly determined}
The event $A$ described in \eqref{the statement} is a \emph{rapidly determined} property. 
\end{thm}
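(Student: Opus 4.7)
The plan is to bound $\Pr[A \text{ is not determined by } X_{1}, \ldots, X_{s}]$ by $e^{-\beta s}$ for some $\beta > 0$. Observe that $A$ becomes determined as soon as either (i) the BFS frontier walk $S_{n} = X_{1} + \cdots + X_{n} - (n - 1)$ reaches $0$ at some $\tau \leq s$, witnessing that $T$ is finite; or (ii) some node $v$ among the first $s$ BFS-visited nodes has its entire depth-$d(T_{0})$ subtree revealed within these $s$ steps and isomorphic to $T_{0}$, witnessing $T \supseteq T_{0}$. Consequently the failure event is contained in $\{\tau > s\} \cap \{\text{no verified copy of } T_{0}\}$, and the goal is to bound the probability of this intersection.

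For the subcritical regime $\lambda < 1$, the increments of the walk $S_{n}$ have strictly negative mean $\lambda - 1$; a Cram\'er-type large deviation bound gives $\Pr[\tau > s] \leq e^{-\beta s}$ directly, so the failure event is already exponentially rare with no reference to $T_{0}$ needed.

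For the supercritical regime $\lambda > 1$, $\Pr[\tau > s]$ is of constant order, so the main work is to show that, \emph{on} the event $\{\tau > s\}$, a verified copy of $T_{0}$ is quite surely present among the first $s$ BFS steps. The strategy is to extract $\Omega(s)$ mutually independent candidate depth-$d(T_{0})$ subtrees from within these $s$ steps. A Cram\'er bound on $S_{n}$ (now with positive drift) shows that $S_{s} \geq (\lambda - 1 - \eps) s$ quite surely on $\{\tau > s\}$, and standard concentration for supercritical Galton-Watson generation sizes shows that BFS has completed $g = \Theta(\log s)$ whole generations with the generation size $Z_{g - d(T_{0})}$ being $\Omega(s)$ quite surely. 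Each of these $\Omega(s)$ nodes at generation $g - d(T_{0})$ has its entire depth-$d(T_{0})$ subtree lying within the first $s$ BFS steps; conditional on the generation structure, these subtrees are mutually independent copies of $T_{\lambda}|_{d(T_{0})}$, each matching $T_{0}$ with positive probability $q_{0} := \Pr[T_{\lambda}|_{d(T_{0})} \cong T_{0}] > 0$. By independence, the chance that none of them matches $T_{0}$ is at most $(1 - q_{0})^{\Omega(s)} = e^{-\Omega(s)}$.

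The main obstacle lies in the bookkeeping for the supercritical case: one must simultaneously control (a) the number of BFS-completed generations within $s$ steps, (b) the generation size $Z_{g - d(T_{0})}$ on the event $\{\tau > s\}$, and (c) the genuine independence of the harvested depth-$d(T_{0})$ subtrees. Each piece is a standard large-deviation or martingale-based fact about Galton-Watson processes, but weaving them together while respecting the conditioning on $\{\tau > s\}$ is the technical heart of the argument.
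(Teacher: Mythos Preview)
Your route is quite different from the paper's. The paper never splits on $\lambda$: it introduces a \emph{forest process} $T_\lambda^{for}$ (restarting a fresh Galton--Watson tree whenever the current one dies) so that the indicators $I_i=\mathbf{1}\{T(i)\cong T_0\}$ make sense for every index $i$, sets $Y=\sum_{i\le\lfloor\eps^{d_0}s\rfloor}I_i$, shows via iterated Chernoff bounds that quite surely all depth-$d_0$ descendants of these nodes have BFS index at most $s$ (so $Y$ is $X_1,\ldots,X_s$-measurable), and then proves a Lipschitz bound $|Y_i-Y_{i-1}|\le d_0+1$ on the Doob martingale and applies Azuma. No independence of subtrees is ever invoked, and the argument runs uniformly over all $\lambda>0$.

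Your sketch has two genuine gaps. The minor one is that $\lambda=1$ is omitted: there $\Pr[\tau>s]\asymp s^{-1/2}$ is not exponentially small and there is no positive drift, so neither of your two cases applies, yet the theorem is stated for all $\lambda>0$. The more serious gap is the claim that $Z_{g-d(T_0)}=\Omega(s)$ \emph{quite surely}. This is not ``standard concentration'': for any \emph{deterministic} level $\ell\approx\log_\lambda s$ one has $Z_\ell\approx W\lambda^\ell$ with $W$ the nondegenerate martingale limit, so $\Pr[Z_\ell<cs\mid\text{survival}]$ does not decay in $s$ at all. The heuristic that $W$ cancels when $g$ is the \emph{random} last completed BFS generation is correct, but turning it into an exponential bound requires a union bound over the possible values of $g$ together with Chernoff control of the ratios $Z_{j+1}/Z_j$ across $d_0+1$ consecutive generations. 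Worse, the event $\{\tau>s\}$ and the value of $g$ depend on $Z_{g-d_0+1},\ldots,Z_{g+1}$, which are functions of exactly the depth-$d_0$ subtrees you wish to treat as independent, so the conditional-independence step cannot be taken for granted. These difficulties are the substance of the proof, not routine bookkeeping; the paper's Azuma argument sidesteps them entirely.
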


\par We shall now prove Theorem \ref{any finite tree is almost surely present} subject to Theorem \ref{A rapidly determined}. Fix an arbitrary finite $T_{0}$. Assume Theorem \ref{any finite tree is almost surely present} is false so that $\Pr[A] < 1$, where $A$ is as in \eqref{the statement}.
For each $s\in \mathbb{N}$, with probability at least $1-\Pr[A]$ the values $X_1,\ldots,X_s$ do
not terminate the tree, nor do they force a copy of $T_{0}$.  Then $A$ would not be tautologically
determined.  So $A$ would not be \emph{rapidly determined} and Theorem \ref{A rapidly determined}
would be false.  Taking the contrapositive, Theorem \ref{A rapidly determined} implies
Theorem \ref{any finite tree is almost surely present}.  We prove Theorem \ref{A rapidly determined}
in \S \ref{sectrapdet}. 

\begin{rem} \label{conclusion of first theorem}
The conclusion of Theorem \ref{any finite tree is almost surely present} is 
really that, fixing any finite tree $T_{0}$, $T_{\lam}^*$
contains $T_0$ as a subtree with probability one.
We can say a bit more.  Let $T_0$ have root $v$.  For $L\geq 1$ define
$T_0[L]$ by adding $L$ new points $v_0,\ldots,v_{L-1}$ and making $v_i$ a child of $v_{i-1}$,
$1\leq i\leq L-1$ and $v$ a child of $v_{L-1}$.  $T_{\lam}^*$ contains $T_0[L]$ with
probability one.  But then it contains a $T_0$ where the root of $T_0$ is at least distance
$L$ from the root of $T$.  We thus deduce that for any finite $T_0$ and any $L$ there
will, with probability one in $T_{\lam}^*$, be a $v$ at distance at least $L$ from the
root such that $T(v)\cong T_0$.
\end{rem}

\subsection{Ehrenfeucht Games}\label{sectehr}

\par We use a very standard and well-known tool to analyze first order properties on rooted trees, namely the Ehrenfeucht games. The Ehrenfeucht games are what bridges the gap between mathematical logic and a complete structural description of logical statements on graphs. Fix a positive integer $k$. The standard $k$-move Ehrenfeucht game used to analyze first order properties partitions the space of all rooted trees into finitely many equivalence classes. Any two trees belonging to the same equivalence class if and only if they have the same truth value for every first order property of quantifier depth $\leq k$. That is, given a first order sentence $A$ of quantifier depth at most $k$, if $A$ holds true for one of the trees in an equivalence class, then it holds true for all others in that class as well. This notion is made more precise in the following exposition.
\par We begin with describing the {\em standard} game, and later move on to a more specialized variant of the game that is suited to our analysis. Fix $k\geq 1$ and two trees $T_{1}$ rooted at $R_{1}$ and $T_{2}$ rooted at $R_{2}$ (these are known to both players). The Ehrenfeucht game $EHR[T_1,T_2;k]$ is a $k$-round game between two players, the Spoiler and the Duplicator. In each round Spoiler picks a vertex from \emph{either} $T_1$ or $T_2$ and then Duplicator picks a vertex from the other tree.  Letting $x_1,\ldots,x_k$; $y_1,\ldots,y_k$ be the vertices selected (in that order) from $T_1,T_2$ respectively, Duplicator wins if \emph{all} of the following hold:
\ben 
\item $x_i=R_1$ iff $y_i=R_2$;
\item $\pi(x_i,x_j)$ iff $\pi(y_i,y_j)$, i.e. $x_{i}$ is the parent of $x_{j}$ if and only if $y_{i}$ is the parent of $y_{j}$;
\item $\pi(R_1,x_i)$ iff $\pi(R_2,y_i)$, i.e., if $x_{i}$ is a child of the root $R_{1}$, then $y_{i}$ is a child of $R_{2}$, and vice versa;
\item $x_i=x_j$ iff $y_i=y_j$.
\een
We write $T_1\equiv_k T_2$ if and only if Duplicator
wins $EHR[T_1,T_2;k]$. This equivalence relation partitions all rooted trees into finitely many equivalence classes. It can be shown that two rooted trees $T_1,T_2$ (with roots $R_1,R_2$) have the same $k$-Ehrenfeucht value iff they satisfy precisely the same first order properties of quantifier depth at most $k$.


\par We shall now describe the promised modified version of the game.
Let $T$ be a rooted tree, $v\in T$, and $r>0$.  Let $T^-$ be the (undirected)
tree on the same vertex set with $x,y$ adjacent iff one of them is the parent of
the other.  Let $B_{T}(v;r)$ denote the ball of radius $r$ around $v$.  That is,
\begin{equation}\label{ball} B_{T}(v; r) = \{u \in T: d(u, v) < r \text{ in } T^- \} 
\end{equation}
Here $d(\cdot, \cdot)$ gives the usual graph distance. (For example, cousins are
at distance four.)
\par Let $k$ (the number of rounds) and $M$ (an upper bound on the maximal distance) be fixed.
Let $T_1,T_2$ be trees with \emph{designated nodes}
$v_{1} \in T_{1}, v_{2} \in T_{2}$. Set
$$B_{i} = B_{T_{i}}(v_{i}; \lfloor M/2 \rfloor), \quad i = 1, 2.$$ 
The $k$-move $M$-distance preserving Ehrenfeucht game, 
denoted by $EHR_{M}[B_{1}, B_{2}; k]$, is played 
on these balls. We add a round zero in which the moves $v_1,v_2$ must be played.
(Essentially these are designated vertices.)  As before,
each round ($1$ through $k$) Spoiler picks a vertex from either $T_1$ or
$T_2$ and then Duplicator picks a vertex from the other tree.  Letting
$x_0,\ldots,x_k$; $y_0,\ldots,y_k$ be the vertices selected from
$T_1,T_2$ respectively, Duplicator wins if
\begin{itemize}
\item For $0\leq i, j \leq k$, $d(x_i,x_j)=d(y_i,y_j)$.  Equivalently, for all $1\leq s \leq M$ and
all $0\leq i, j \leq k$, $d(x_i,x_j)=s$ if and only if $d(y_i,y_j)=s$.
\item For $0 \leq i, j \leq k$, $\pi(x_i,x_j)$ iff $\pi(y_i,y_j)$.
\item For $0\leq i, j \leq k$, $x_i=x_j$ iff $y_i=y_j$.
\end{itemize}

\par Two balls $B_{1}, B_{2}$ (as described above) are said to have \emph{the same $(M; k)$-Ehrenfeucht 
value} if Duplicator wins $EHR_{M}[B_{1}, B_{2}; k]$. We denote this by 
\begin{equation}\label{Mkequiv}
B_{1} \equiv_{M; k} B_{2}
\end{equation}
This being an equivalence relation, the space of all such balls with designated centers, 
is partitioned into \emph{$(M; k)$-equivalence classes}. We let $\Sigma_{M; k}$ denote the set of all 
$(M; k)$-equivalence classes. 

\par We create a first order language consisting of $=,\pi(x,y)$ and $d(x,y)=s$ for $1\leq s\leq M$ (note that $s$ is \emph{not} a variable here).
There are only finitely many binary predicates (relations involving two variables).  (In general adding the distance function 
would add an unbounded number of binary predicates.  In our case, however, the diameter is
bounded by $M$ and so we are only adding the $M$ predicates $d(x,y)=s, 1 \leq s \leq M$.)
Hence the number of equivalence classes corresponding to this game will also be finite. 
That is,  $\Sigma_{M; k}$ is a finite set.\\

\subsection{Universal Trees}\label{sectuniv}

\par A \emph{universal tree}, as defined below, shall have the property that once
$T$ contains it, all first order statements up to quantifier depth $k$ depend
only on the local neighborhood of the root.

\begin{defn} \label{universal tree}
Fix a positive integer $k$. Let 
\begin{equation} \label{M value}
M_{0} = 2 \cdot 3^{k+1}.
\end{equation} 
A finite tree $T_{0}$ will be called \emph{universal} if the following phenomenon happens: 
Take any two trees $T_{1}, T_{2}$ with roots $R_{1}, R_{2}$ such that:
\begin{enumerate}
\item the $3^{k+1}$ neighbourhoods around the root have the same $(M_{0}; k)$ value, i.e. 
\begin{equation} \label{neighbourhood around the roots}
B_{T_{1}}(R_{1}; 3^{k+1}) \equiv_{M_{0}; k} B_{T_{2}}(R_{2}; 3^{k+1}).
\end{equation}
\item For some $u_{1} \in T_{1}, u_{2} \in T_{2}$ such that
\begin{equation} \label{distance of universal tree from root more}
d(R_{1}, u_{1}) > 3^{k+2}, \quad d(R_{2}, u_{2}) > 3^{k+2},
\end{equation}
we have 
\begin{equation}\label{a97}
T_1(u_1) \cong T_2(u_2) \cong T_0.
\end{equation}
\end{enumerate}
Then $T_1\equiv_k T_2$.  Equivalently, Duplicator wins the \emph{standard} $k$-move Ehrenfeucht 
game played on $T_{1}, T_{2}$. 
\end{defn}

\begin{rem}
Technically, we should call such a $T_{0}$ as described in Definition \ref{universal tree} $k$-\emph{universal}. However, in the sequel, we simply refer to this as \emph{universal} for the convenience of notation, and since the dependence on $k$ will be clear in each context.
\end{rem}

\par We prove in Theorem \ref{xmastree} that such a \emph{universal tree} indeed exists, by imposing sufficient structural conditions on it.

\begin{rem} \label{how neighbourhood of root suffices}
Fix a certain \emph{universal tree} $UNIV_{k}$, given $k \in \mathbb{N}$. Using theorem \ref{any finite tree is almost surely present}, 
we conclude that $T_{\lam}^{*}$ will almost surely contain $UNIV_{k}$. From Remark \ref{conclusion of first theorem}, we 
say further that there will almost surely exist a node $v$ at distance $> 3^{k+2}$ from the root such 
that $$T(v) \cong UNIV_{k}.$$ From the definition of \emph{universal trees}, then the \emph{standard} 
Ehrenfeucht value of $T_{\lam}^{*}$ will be determined by the $(M_{0}; k)$-Ehrenfeucht value of $B_{T_{\lam}^{*}}(R; 3^{k+1})$, 
the $3^{k+1}$-neighbourhood of the root $R$, where $M_{0}$ is as in \eqref{M value}.
\end{rem}

\subsection{An Almost Sure Theory}\label{a.s.theory:subsec}

\par Let $\mathcal{B}_{i}, 1 \leq i \leq N$ for some positive integer $N$, denote the finitely many $(M_{0}; k)$-equivalence 
classes. Note that these are defined on balls of radius $3^{k+1}$ centered at a designated vertex which is a node in some tree. Then for every realization $T$ of $T_{\lam}^{*}$, 
\begin{equation}\label{justone}
B_{T}(R; 3^{k+1}) \in \mathcal{B}_{i} \quad \text{for precisely one }i, 1 \leq i \leq N.
\end{equation} 
Almost surely for two realizations $T_{1}, T_{2}$ of $T_{\lambda}^{*}$ which have the same local neighbourhoods of the roots, i.e.
$$B_{T_{1}}(R_{1}; 3^{k+1}) \in \mathcal{B}_{i}, \quad B_{T_{2}}(R_{2}; 3^{k+1}) \in \mathcal{B}_{i} \quad \text{for the same } i,$$ 
we have $T_{1} \equiv_{k} T_{2}$.   As the $\mathcal{B}_i$ are equivalence classes over the space of rooted trees they may be considered
properties of rooted trees and so have probabilities $\Pr^{*}[\mathcal{B}_i]$ in $T_{\lam}^{*}$.  As they finitely partition the
space of all rooted trees
\begin{equation}\label{a6}
\sum_{i=1}^{N} \Pr^{*}[\mathcal{B}_{i}] = 1.
\end{equation}

\par Let $\mathcal{AS}$ denote the almost sure theory for $T_{\lam}^*$. That is, $\mathcal{AS}$ consists of all first order sentences $B$ such that
$\Pr^*[B]=1$.  We now give an axiomatization of $\mathcal{AS}$. Let $\mathcal{T}$ be defined by the following schema:
\begin{equation}\label{Tschema} 
A\left[T_{0}\right] := \left\{\exists \ v: T(v) \cong T_{0}\right\}, \ \text{for all } T_{0} \text{ finite trees}.
\end{equation}

\begin{thm} \label{nearly complete theory}
Under the probability $\Pr^{*}$,
\begin{equation}\label{a9}
\mathcal{T} = \mathcal{AS}
\end{equation}
That is, the first order statements $B$ with $\Pr^{*}[B] = 1$ are precisely those statements derivable from the axiom schema $\mathcal{T}$.
\end{thm}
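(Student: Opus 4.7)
The plan is to prove the two inclusions $\mathcal{T} \subseteq \mathcal{AS}$ and $\mathcal{AS} \subseteq \mathcal{T}$. The forward inclusion follows immediately from Theorem~\ref{any finite tree is almost surely present}: each axiom $A[T_0] \in \mathcal{T}$ satisfies $\Pr^*[A[T_0]]=1$, the countable intersection of these prob-$1$ events again has $\Pr^*$-probability $1$, and $\mathcal{AS}$ is closed under first-order consequence, so every sentence derivable from $\mathcal{T}$ lies in $\mathcal{AS}$.

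For the reverse inclusion, I would fix $B \in \mathcal{AS}$ of quantifier depth $k$ and enumerate the (finitely many) $(M_0; k)$-equivalence classes of root balls as $\mathcal{B}_1, \ldots, \mathcal{B}_N$. For each $i$ I would write a first-order sentence $C_i$ in the pure $\pi$-language---unfolding each bounded distance predicate $d(x,y)=s$ with $s \leq M_0$ into an iterated parent/child formula---that holds in $T$ iff $B_T(R; 3^{k+1}) \in \mathcal{B}_i$. The $C_i$ partition every rooted tree, so $\bigvee_{i=1}^N C_i$ is tautologous. The central claim is that $\mathcal{T} \vdash C_i \to B$ for every $i$. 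To see this, instantiating the schema $\mathcal{T}$ at $UNIV_k[L]$ for $L > 3^{k+2}$ (using Remark~\ref{conclusion of first theorem}) forces every model of $\mathcal{T}$ to contain $UNIV_k$ at a vertex of depth exceeding $3^{k+2}$; combined with the defining property of universal trees (Definition~\ref{universal tree}), any two models of $\mathcal{T} \cup \{C_i\}$ are then $\equiv_k$-equivalent, so $\mathcal{T} \cup \{C_i\}$ decides $B$.

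To rule out $\mathcal{T} \cup \{C_i\} \vdash \neg B$ in the consistent case, I would compare with probability. By the forward inclusion, $\Pr^*$-almost every realization of $T_\lambda^*$ models $\mathcal{T}$; if $\mathcal{T} \cup \{C_i\} \vdash \neg B$ then every such realization satisfying $C_i$ satisfies $\neg B$, giving $\Pr^*[C_i] \leq \Pr^*[\neg B] = 0$. But under Poisson branching every ball structure has positive probability, and any ball containing a vertex at depth $3^{k+1}-1$ admits an infinite extension with positive probability, so every $(M_0; k)$-class compatible with an infinite tree has strictly positive $\Pr^*$-mass. Thus $\Pr^*[C_i]=0$ forces $\mathcal{T} \cup \{C_i\}$ inconsistent, in which case $\mathcal{T} \cup \{C_i\} \vdash B$ trivially. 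Hence $\mathcal{T} \vdash C_i \to B$ in every case, and together with the tautology $\bigvee_i C_i$ this yields $\mathcal{T} \vdash B$.

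The main obstacle I expect is the bridge between the probabilistic assertion $\Pr^*[B]=1$ and the syntactic derivability $\mathcal{T} \vdash B$: specifically, confirming that every $(M_0; k)$-class consistent with $\mathcal{T}$ has strictly positive $\Pr^*$-mass. This requires a careful inspection of the combinatorial content of the equivalence classes---in particular that the presence of a boundary vertex in the ball is an invariant of $\equiv_{M_0; k}$---together with the positivity of Poisson mass for any prescribed ball structure. The routine translation of the $C_i$ into the $\pi$-language, and the appeal to Remark~\ref{conclusion of first theorem} to position $UNIV_k$ at arbitrary depth, are secondary technicalities.
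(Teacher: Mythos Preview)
Your proposal is correct and follows essentially the same route as the paper: both directions are proved the same way, with the reverse inclusion obtained by showing that $\mathcal{T}+\mathcal{B}_i$ (your $\mathcal{T}\cup\{C_i\}$) is $k$-complete via the universal tree, and then using $\Pr^*[B]=1$ together with the tautology $\bigvee_i \mathcal{B}_i$ to conclude $\mathcal{T}\vdash B$.

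One point worth noting: the paper simply asserts that $\Pr^*[A]=1$ forces $J_A=\{1,\ldots,N\}$, which tacitly assumes every class $\mathcal{B}_i$ consistent with $\mathcal{T}$ has $\Pr^*[\mathcal{B}_i]>0$. You identified exactly this as the main obstacle and supplied the missing argument (positivity of Poisson mass for any prescribed ball, plus the observation that having a boundary vertex is an $(M_0;k)$-invariant so that consistency with $\mathcal{T}$ forces positive $\Pr^*$-mass). In that sense your write-up is actually more careful than the paper's own proof at this step; otherwise the two arguments coincide.
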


As $\mathcal{T}$ does not depend on $\lam$ we also have:

\begin{cor}\label{allsame}  The almost sure theory $\mathcal{AS}$ is the same for all $\lam > 1$.
\end{cor}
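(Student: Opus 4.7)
The plan is to deduce Corollary \ref{allsame} directly from Theorem \ref{nearly complete theory}, with essentially no additional work beyond verifying that the axiom schema $\mathcal{T}$ is genuinely independent of $\lam$. I would first fix two arbitrary rates $\lam_1, \lam_2 > 1$ and let $\mathcal{AS}_{\lam_i}$ denote the almost sure theory under the conditional measure $\Pr^{*}$ associated with $\lam_i$. Applying Theorem \ref{nearly complete theory} separately at each $\lam_i$ yields
\begin{equation*}
\mathcal{AS}_{\lam_1} \;=\; \mathcal{T} \;=\; \mathcal{AS}_{\lam_2},
\end{equation*}
and the corollary follows.

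The only content beyond invoking the theorem is the observation that $\mathcal{T}$ is a $\lam$-free syntactic object. Inspecting \eqref{Tschema}, each axiom $A[T_0] := \{\exists \ v : T(v) \cong T_0\}$ is a first order sentence in the language of rooted trees whose only parameter is a finite rooted tree $T_0$, and the schema ranges over all such $T_0$ up to isomorphism. Neither the indexing set nor the individual sentences reference any probability measure or the parameter $\lam$, so the schema, and hence its deductive closure, is one fixed set of sentences for every $\lam > 1$.

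There is no genuine obstacle here, since all the work has already been packaged into Theorem \ref{nearly complete theory}: the forward inclusion $\mathcal{T} \subseteq \mathcal{AS}$ rests on Theorem \ref{any finite tree is almost surely present} (each $A[T_0]$ holds almost surely in $T_\lam^*$), while the reverse inclusion rests on the universal-tree machinery of Section \ref{sectuniv} together with the finiteness of the $(M_0;k)$-equivalence classes and \eqref{a6}. Once Theorem \ref{nearly complete theory} is available, Corollary \ref{allsame} is the one-step observation that a $\lam$-independent axiomatization forces a $\lam$-independent theory.
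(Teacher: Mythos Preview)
Your proposal is correct and follows exactly the paper's own reasoning: the corollary is stated immediately after Theorem \ref{nearly complete theory} with the single remark that $\mathcal{T}$ does not depend on $\lam$, which is precisely the observation you spell out.
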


That $\mathcal{T} \subseteq \mathcal{AS}$ is already clear from Theorem \ref{any finite tree  is almost surely present}. 
To show the reverse inclusion, consider for every $1 \leq i \leq N$, $\mathcal{T} + \mathcal{B}_{i}$.  In this theory
every finite $T_0$ is contained as a subtree and 
the $3^{k+1}$-neighbourhood of the root belongs to the equivalence class $\mathcal{B}_{i}$.
As discussed above in Remark \ref{how neighbourhood of root suffices}, this set of information completely determines the \emph{standard} 
Ehrenfeucht value of the infinite tree. That is, for any first order sentence $A$ of quantifier depth $k$ 
\begin{equation}\label{eitheror}
\text{either } \mathcal{T} + \mathcal{B}_{i} \models A \quad \text{or } \mathcal{T} + \mathcal{B}_{i} \models \neg A.
\end{equation}
The standard notation $T \models A$ for a tree $T$ and a property $A$ means that the property $A$ holds true for tree $T$. Set
\begin{equation}\label{JA} J_{A} = \{1 \leq i \leq N: \mathcal{T} + \mathcal{B}_{i} \models A\}. \end{equation}
Under $T_{\lam}^*$, $A$ holds \emph{if and only if} $\mathcal{B}_i$ holds for some $i\in J_A$.  Thus we can express
\begin{equation}\label{anyA} \Pr^*[A] = \sum_{i\in J_A} \Pr^*[\mathcal{B}_i].  \end{equation}
In Section \ref{sectprob}  we shall use this to express all $\Pr^*[A]$ in reasonably succint form.

Now suppose, under $T_{\lam}^*$, that $\Pr^{*}[A] = 1$.  As the $\mathcal{B}_i$ partition the neighbourhoods around the roots of trees, this
implies  that $J_{A} = \{1, 2, \ldots N\}$. That is, $\mathcal{T} + \mathcal{B}_{i} \models A$ 
for all $1 \leq i \leq N$ and $\bigvee_{i=1}^N \mathcal{B}_i$ is a tautology. 
Hence $A$ is derivable from $\mathcal{T}$. Thus $\mathcal{AS} \subseteq \mathcal{T}$.\\

\par In Section \ref{sectprob} below, we turn to the computation of the possible $\Pr^*[A]$.
As seen above, in the 
space of $T_{\lam}^{*}$, the neighbourhoods around the root of sufficiently large radius are instrumental in 
determining the \emph{standard} 
Ehrenfeucht value of the tree. It only makes sense, therefore, to compute the probabilities of having specific 
neighbourhoods around the 
root conditioned on the tree being infinite. We shall do this in a recursive 
fashion, using induction on the number of generations below the root that we are considering.\\

\section{Containing All Finite Trees}\label{sectinf}

\subsection{A Rapidly Determined Property}\label{sectrapdet}

\par We prove here Theorem \ref{A rapidly determined}. We fix an arbitrary finite tree $T_{0}$ with depth $d(T_{0}) = d_{0}$, following the notation given in \ref{notations}. We alter the \emph{fictitious continuation} process $T_{\lam}$ described previously. If for some finite, first $n \in \mathbb{N}$, we have
$\sum_{i=1}^n X_i = n-1$, then the actual tree has vertices $1,\ldots,n$. If this phenomenon does not happen for any finite $n$, then we have one infinite tree described by our fictitious continuation. If the tree does abort after $n$ vertices, we begin a new tree with vertex $n+1$ as the root, and generate it from $X_{n+1},X_{n+2},\ldots$.  Again, if this tree terminates at some $n_1$ we begin a new tree with vertex $n_1+1$. Continuing, we generate an infinite forest, with vertices the positive integers. We call this the {\em forest} process and label it $T_{\lam}^{for}$.

Then we define, for every $s \in \mathbb{N}$, the event (in $T_{\lam})$
\begin{equation} \label{good event}
good(s) = \{A \text{ is completely determined by } X_{1}, \ldots X_{s}\},  
\end{equation}
where $A$ is as in \eqref{the statement}. Set $bad(s) = good(s)^{c}$. 
For every node $i \in \mathbb{N}$, define in $T_{\lam}^{for}$ 
\begin{equation}\label{Isubi}
I_{i} = \mathbf{1}_{T(i) \cong T_{0}}.
\end{equation}
That is, $I_i$ is the indicator function of the event that {\em in the random forest}
$T(i)\cong T_0$.  Set
\begin{equation}
Y = \sum_{i=1}^{\lfloor \epsilon^{d_{0}} s \rfloor} I_{i},
\end{equation}
where, with foresight, we require 
\begin{equation} \label{condition on epsilon}
0 < \epsilon < \frac{1}{\lam + 1}.
\end{equation}
(Our $\eps$ is chosen sufficiently small so that quite surely in $s$,
in $T_{\lam}^{for}$, 
all of the $(\leq d_0)$-descendants $j$ of all $i\leq s\eps$ have
$j\leq s$.)  
We create a martingale, setting, for $1 \leq i \leq s$,
\begin{equation} \label{the martingale}
Y_{i} = E[Y| X_{1}, X_{2}, \ldots X_{i}], \quad Y_{0} = E[Y].
\end{equation}

\par 
In $T^{for}_{\lam}$, 
for $x \in \mathbb{R}^{+}, i \in \mathbb{N}$, 
set 
\begin{equation}\label{a22}
\mathcal{S}_{i}(x) = \{\text{indices of all } i\text{-descendants of nodes } 1, 2, \ldots \lfloor x \rfloor\}
\end{equation} 
with $\mathcal{S}_{0}(x) = \{1, 2, \ldots \lfloor x \rfloor\}$, where an $i$-descendant is as described in Notations \ref{notations}. Define, for $i \in \mathbb{N}$,
\begin{equation}\label{a23}
g_{i}(x) = \text{highest index 
recorded in } \displaystyle \bigcup_{j=0}^{i} \mathcal{S}_{j}(x).
\end{equation}

\begin{lem} \label{highest index of a d-child}
For any $x \in \mathbb{R}^{+}, d \in \mathbb{N}$, 
\begin{equation}\label{a33}
g_{d}(x) = g_{1}^{d}(x). 
\end{equation}
Here $g_1^d$ denotes the $d$-times composition of $g_1$.
\end{lem}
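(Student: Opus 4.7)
The plan is to prove the lemma by induction on $d$, but strengthened to the invariant
\[
\bigcup_{j=0}^{d} \mathcal{S}_j(x) \;=\; \{1, 2, \ldots, g_d(x)\},
\]
i.e.\ the set of $(\leq d)$-descendants of $\{1, \ldots, \lfloor x \rfloor\}$ is always an initial segment of $\mathbb{N}$. Once this is established, taking maxima in the identity $\bigcup_{j=0}^d \mathcal{S}_j(x) = \{1, \ldots, g_{d-1}(x)\} \cup \{\text{children of } \{1, \ldots, g_{d-1}(x)\}\}$ gives $g_d(x) = g_1(g_{d-1}(x))$, and the lemma follows by iteration.

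The pivotal step I would isolate first is a \emph{prefix preservation} claim: for every integer $n \geq 1$, $\{1, \ldots, n\} \cup \{\text{children of } 1, \ldots, n\} = \{1, \ldots, g_1(n)\}$. To see this in the forest $T_{\lam}^{for}$, partition $\{1, \ldots, n\}$ according to which constituent tree each index belongs to. Any tree fully contained in $\{1, \ldots, n\}$ contributes only its non-root nodes as children, and these already lie in $\{1, \ldots, n\}$. The only children that can exceed $n$ come from the (possibly partial) tree currently containing $n$; since BFS assigns children contiguous indices starting immediately after its root, these new children fill an interval $\{n+1, \ldots, m'\}$ with no gaps. The roots of later trees with index $\leq n$ are not children of anything, but since they already belong to $\{1, \ldots, n\}$ no hole appears, so the union is exactly $\{1, \ldots, \max(n, m')\} = \{1, \ldots, g_1(n)\}$.

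For the induction itself, the base case $d=1$ is immediate. Assume the invariant at level $d-1$, so $\{1, \ldots, g_{d-1}(x)\} = \bigcup_{j=0}^{d-1} \mathcal{S}_j(x)$. For $\bigcup_{j=0}^d \mathcal{S}_j(x) \subseteq \{1, \ldots, g_{d-1}(x)\} \cup \{\text{children}\}$, note that $\mathcal{S}_d(x)$ consists by definition of children of $\mathcal{S}_{d-1}(x)$ and $\mathcal{S}_{d-1}(x) \subseteq \{1, \ldots, g_{d-1}(x)\}$. For the reverse inclusion, every $u \in \{1, \ldots, g_{d-1}(x)\}$ lies in $\mathcal{S}_j(x)$ for some $j \leq d-1$, so any child of $u$ lies in $\mathcal{S}_{j+1}(x) \subseteq \bigcup_{j'=0}^d \mathcal{S}_{j'}(x)$. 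Combining these inclusions with the prefix preservation claim applied at $n = g_{d-1}(x)$ gives $\bigcup_{j=0}^d \mathcal{S}_j(x) = \{1, \ldots, g_1(g_{d-1}(x))\}$, closing the induction.

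The main obstacle I anticipate is verifying the prefix preservation claim in the forest setting, where BFS is restarted at each new tree and one must rule out ``holes'' created by new roots. Once that is in hand, the inductive step is a clean bookkeeping argument on the generations of descendants.
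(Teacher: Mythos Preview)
Your proposal is correct and follows essentially the same inductive scheme as the paper, which also argues that the $(\leq d)$-descendants of $\{1,\ldots,\lfloor x\rfloor\}$ coincide with the $(\leq d-1)$-descendants of $\{1,\ldots,g_1(x)\}$ and then applies the induction hypothesis. The main difference is that you explicitly isolate and justify the prefix-preservation claim (that $\{1,\ldots,n\}\cup\{\text{children of }1,\ldots,n\}=\{1,\ldots,g_1(n)\}$ in $T_\lambda^{for}$), whereas the paper uses this fact implicitly without comment; your version is therefore more rigorous, though not a genuinely different argument.
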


\begin{proof}
We prove this using induction on $d$. For $d = 1$ this is true by definition of $g_{1}$. For $d = 2$, the highest possible 
index of all the children and grandchildren of $1, 2, \ldots \lfloor x \rfloor$ is equal to the highest index of the 
children of the nodes $1, 2, \ldots g_{1}(\lfloor x \rfloor) = g_{1}(x)$, which is $g_{1}(g_{1}(x))$. Now 
suppose we have proved the claim for some $d \in \mathbb{N}, d \geq 2$. Once again, a similar argument comes into 
play. The highest index among all the $(d+1)$-descendants of nodes $1, 2, \ldots \lfloor x \rfloor$, is also equal to the highest 
index among all the $d$-descendants of the nodes $1, 2, \ldots g_{1}(x)$, which by induction hypothesis 
will be $g_{1}^{d}(g_{1}(x)) = g_{1}^{d+1}(x)$.
\end{proof}


\par When $g_{d_{0}}(\lfloor \epsilon^{d_{0}} s \rfloor) \leq s$, the descendents $j$ of
$1,\ldots, \lfloor \epsilon^{d_{0}} s \rfloor$ down to generation $d_0$ will all have $j\leq s$.  Thus
$Y$ will be completely determined by $X_{1}, \ldots X_{s}$. That is,
\begin{equation} \label{sufficient condition}
g_{1}^{d_{0}}(\lfloor \epsilon^{d_{0}} s \rfloor) \leq s \quad \Rightarrow \quad Y_{s} = Y.
\end{equation}

\par A few observations about the function $g_{1}(\cdot)$ are important. First, 
\begin{equation}\label{a24}
g_{1}(x) \geq \lfloor x \rfloor \quad \text{for all } x \in \mathbb{R}^{+}.
\end{equation}  
In $T_{\lam}^{for}$
every time the tree terminates, we start a new tree, and that uses up one extra index for the root of the new tree. 
But while 
counting the nodes $1, 2, \ldots \lfloor x \rfloor $, for any $x \in \mathbb{R}^{+}$, at most $\lfloor x \rfloor$ many new trees 
need be started. Therefore
\begin{equation} \label{first upper bound for g1}
g_{1}(x) \leq \lfloor x \rfloor + \sum_{i=1}^{\lfloor x \rfloor} X_{i}.
\end{equation}
Further, by the definition of $g_{1}(\cdot)$, it is clear that it is monotonically increasing. 

\par We shall use the inequality in \eqref{first upper bound for g1} to show that, for $\epsilon$ as chosen in \eqref{condition on epsilon}, quite surely in $s$, we have $Y_{s} = Y$, i.e. $Y$ is tautologically determined by $X_{1}, \ldots, X_{s}$ with exponentially small failure probability in $s$. This involves showing that for $i$ this small, i.e. $1 \leq i \leq \left\lfloor \epsilon^{d_{0}} s\right\rfloor$, $T(i)$ is quite surely determined by $X_{1}, \ldots, X_{s}$.

\par We employ Chernoff bounds.
For $x \in \mathbb{R}^{+}$ and any $\alpha > 0$,
\begin{align} \label{upper bound for g1}
\Pr[g_{1}(\epsilon x) > x] =& \Pr[e^{\alpha g_{1}(\epsilon x)} > e^{\alpha x}] \nonumber\\
\leq & E[e^{\alpha g_{1}(\epsilon x)}] e^{-\alpha x} \nonumber\\
\leq & E[e^{\alpha (\epsilon x + \sum_{i=1}^{\lfloor \epsilon x \rfloor} X_{i})}] e^{-\alpha x} \nonumber\\
=& e^{\alpha \epsilon x} \prod_{i=1}^{\lfloor \epsilon x \rfloor} E[e^{\alpha X_{i}}] e^{-\alpha x} \nonumber\\
=& e^{-(1-\epsilon)\alpha x} \left\{\exp \left[\lam \left(e^{\alpha} - 1\right)\right]\right\}^{\lfloor \epsilon x \rfloor} \nonumber\\
\leq & e^{-(1-\epsilon)\alpha x} \left\{\exp \left[\lam \left(e^{\alpha} - 1\right)\right]\right\}^{\epsilon x} \nonumber\\
=& \exp\{-[(1 - \epsilon)\alpha - \lam (e^{\alpha} - 1)\epsilon]x\}. 
\end{align}

\par It can be checked that for any $\alpha \in \left(0, \log \left(\frac{1-\epsilon}{\lam\epsilon}\right)\right)$, the 
exponent in \eqref{upper bound for g1} is negative, i.e. $-[(1 - \epsilon)\alpha - \lam (e^{\alpha} - 1)\epsilon] < 0$. 
We set
\begin{equation}\label{a25}
\eta = [(1 - \epsilon)\alpha - \lam (e^{\alpha} - 1)\epsilon]
\end{equation}  
Observe that $\eta$ is positive. Now we have the upper bound:
\begin{equation} \label{concise upper bound for g1}
\Pr[g_{1}(\epsilon x) > x] \leq e^{-\eta x}.
\end{equation}

\par We make the following claim:

\begin{lem} \label{upper bound for general d}
For any $d, s \in \mathbb{N}$, 
\begin{equation}
\Pr[g_{1}^{d}(\epsilon^{d} s) > s] \leq \sum_{i=0}^{d-1}e^{-\epsilon^{i} \eta s}.
\end{equation}
\end{lem}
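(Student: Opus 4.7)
My plan is to prove the lemma by induction on $d$, using the single-step Chernoff bound in \eqref{concise upper bound for g1} as the base case and as the engine for the inductive step. The base case $d = 1$ is immediate: set $x = s$ in \eqref{concise upper bound for g1} to get $\Pr[g_1(\epsilon s) > s] \leq e^{-\eta s}$, matching the claimed bound $\sum_{i=0}^{0} e^{-\epsilon^i \eta s}$.

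For the inductive step, assume the bound holds for some $d \geq 1$ and consider $\Pr[g_1^{d+1}(\epsilon^{d+1} s) > s]$. The natural decomposition is to introduce the intermediate threshold $\epsilon^d s$ for the innermost application of $g_1$. Specifically, I split the event $\{g_1^{d+1}(\epsilon^{d+1} s) > s\}$ according to whether $g_1(\epsilon^{d+1} s) > \epsilon^d s$ or not. In the latter case, the monotonicity of $g_1$ (and hence of $g_1^d$) yields
\begin{equation*}
g_1^{d+1}(\epsilon^{d+1} s) = g_1^d\bigl(g_1(\epsilon^{d+1} s)\bigr) \leq g_1^d(\epsilon^d s),
\end{equation*}
so the event forces $g_1^d(\epsilon^d s) > s$. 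A union bound then gives
\begin{equation*}
\Pr[g_1^{d+1}(\epsilon^{d+1} s) > s] \leq \Pr[g_1(\epsilon^{d+1} s) > \epsilon^d s] + \Pr[g_1^d(\epsilon^d s) > s].
\end{equation*}
The first term is controlled by \eqref{concise upper bound for g1} applied with $x = \epsilon^d s$, which yields $\Pr[g_1(\epsilon^{d+1} s) > \epsilon^d s] \leq e^{-\eta \epsilon^d s}$. The second term is bounded by $\sum_{i=0}^{d-1} e^{-\epsilon^i \eta s}$ by the inductive hypothesis. Summing these recovers $\sum_{i=0}^{d} e^{-\epsilon^i \eta s}$, completing the induction.

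The only real subtlety is the appeal to monotonicity of $g_1$, which follows directly from its definition in \eqref{a23}: enlarging the index set $\{1, \ldots, \lfloor x \rfloor\}$ can only enlarge $\mathcal{S}_1$ and hence can only increase (weakly) the supremum of indices. Monotonicity is inherited by $g_1^d$ since compositions of monotone functions are monotone. Beyond this, the proof is a clean induction, and no independence between the two events in the union bound is needed. Note that the scale factor $\epsilon^d$ rather than $\epsilon$ in the first term is exactly what is needed so that the geometric series of exponents $\eta, \eta \epsilon, \eta \epsilon^2, \ldots$ appears in the final bound, matching the statement of the lemma.
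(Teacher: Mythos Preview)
Your proof is correct and follows essentially the same approach as the paper: induction on $d$, with the same split of $\{g_1^{d+1}(\epsilon^{d+1}s)>s\}$ according to whether $g_1(\epsilon^{d+1}s)>\epsilon^d s$, yielding the same two terms bounded by \eqref{concise upper bound for g1} and the inductive hypothesis. Your explicit justification of the monotonicity of $g_1$ (and hence of $g_1^d$) is a welcome addition, as the paper's proof uses it implicitly in passing from the second event to $\Pr[g_1^d(\epsilon^d s)>s]$.
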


\begin{proof}
We prove this using induction on $d$. We have already seen that this holds for $d = 1$. This initiates the induction hypothesis. Suppose it holds for some $d \in \mathbb{N}$. Then
\begin{align}
\Pr[g_{1}^{d+1}(\epsilon^{d+1} s) > s] =& \Pr[g_{1}^{d+1}(\epsilon^{d+1} s) > s, g_{1}(\epsilon^{d+1} s) > \epsilon^{d} s] + \Pr[g_{1}^{d+1}(\epsilon^{d+1} s) > s, g_{1}(\epsilon^{d+1} s) \leq \epsilon^{d} s] \nonumber\\
\leq & \Pr[g_{1}(\epsilon^{d+1} s) > \epsilon^{d} s] + \Pr[g_{1}^{d}(\epsilon^{d} s) > s] \nonumber\\
\leq & e^{-\eta \cdot \epsilon^{d} s} + \sum_{i=0}^{d-1}e^{-\epsilon^{i} \eta s}, \quad \text{by induction hypothesis and } \eqref{concise upper bound for g1}; \nonumber\\
=& \sum_{i=0}^{d} e^{-\epsilon^{i} \eta s}. \nonumber
\end{align}
This completes the proof.
\end{proof}

\par From Lemma \ref{upper bound for general d}, we conclude that 
\begin{equation}\label{a26}
\Pr[g_{1}^{d_{0}}(\lfloor \epsilon^{d_{0}} s \rfloor) > s] \leq \sum_{i=0}^{d_{0}-1}e^{-\epsilon^{i} \eta s},
\end{equation} From \eqref{sufficient condition}, this means
\begin{equation} \label{globalbad upperbound}
\Pr[Y_{s} = Y] \geq 1 - \sum_{i=0}^{d_{0}-1}e^{-\epsilon^{i} \eta s}.
\end{equation}
As promised earlier, we therefore have that, quite surely, $Y_{s} = Y$. In the following definition, we describe the event $Y_{s} = Y$ as $globalgood(s)$, emphasizing the dependence on the parameter $s$. What we can conclude from the above computation is that $globalgood(s)$ fails to happen with only exponentially small failure probability in $s$.

\begin{defn}\label{globalgooddef}  $globalgood(s)$ is the event $Y_s=Y$.  $globalbad(s)$ is
the complement of $globalgood(s)$.
\end{defn}

\par  We now claim that the 
martingale $\{Y_{i}: 0 \leq i \leq s\}$
satisfies a Lipschitz Condition.

\begin{lem}\label{lipschitz}
There exists constant $C > 0$ such that for $1 \leq i \leq s$, 
\begin{equation}
|Y_{i} - Y_{i-1}| \leq C.
\end{equation}
\end{lem}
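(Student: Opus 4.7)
The plan is to show that revealing $X_i$ moves the conditional expectation of at most $d_0+1$ of the indicators $I_k$, so that $C = d_0+1$ suffices.

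I would fix $i$ and, given $X_1, \ldots, X_{i-1}$, let $m = m(X_1, \ldots, X_{i-1})$ be the number of vertices discovered by BFS after processing nodes $1, \ldots, i-1$ in $T_{\lam}^{for}$; this quantity is $\sigma(X_1, \ldots, X_{i-1})$-measurable and satisfies $m \geq i-1$. Writing $N = \lfloor \eps^{d_0} s \rfloor$, I would partition $\{1, \ldots, N\}$ into the group $A = (\{1, \ldots, m\} \cap \{1, \ldots, N\}) \cup \{i\}$ of \emph{early} indices, whose abstract vertex identity (parent, ancestors, etc.) is already determined by $(X_1, \ldots, X_{i-1})$, and the complementary group $B$ of \emph{late} indices, all of which then satisfy $k > i$.

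For every $k \in B$, the subtree $T(k)$, viewed as an abstract rooted tree, has the Galton-Watson law and is independent of $X_1, \ldots, X_i$, since it is generated from the values $\{X_j : j \geq k\}$. Hence $\Pr[T(k) \cong T_0 \mid X_1, \ldots, X_i] = \rho$ for a constant $\rho = \rho(T_0, \lam)$, and because $|B|$ is $\sigma(X_1, \ldots, X_{i-1})$-measurable, the total contribution $|B|\rho$ of $B$ to $E[Y \mid X_1, \ldots, X_i]$ coincides with its contribution to $E[Y \mid X_1, \ldots, X_{i-1}]$; thus $B$ contributes nothing to $Y_i - Y_{i-1}$. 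For $k \in A$, the expectation $E[I_k \mid X_1, \ldots, X_i]$ can differ from $E[I_k \mid X_1, \ldots, X_{i-1}]$ only when $X_i$ actually enters the structure of $T(k)$, which forces $k = i$ or $k$ to be a strict ancestor of $i$. If $k$ is a strict ancestor at tree-distance $d > d_0$, then $T(k)$ already contains node $i$ at depth $\geq d > d_0 = d(T_0)$, so $I_k \equiv 0$ regardless of the value of $X_i$. The contributing indices are therefore $k = i$ together with the at most $d_0$ ancestors of $i$ at distance $\leq d_0$: at most $d_0 + 1$ terms, each of absolute value at most $1$ since $I_k \in \{0,1\}$. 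Summing yields $|Y_i - Y_{i-1}| \leq d_0 + 1$.

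The conceptually delicate point, and what I expect to be the main obstacle to present cleanly, is the BFS label-shift phenomenon: altering $X_i$ genuinely relabels every vertex discovered after node $i$ in the forest process, so a priori many $I_k$ for $k > i$ might change. The cancellation achieved for group $B$ is precisely what rescues the argument --- although individual indicators for late indices do depend on $X_i$ through this relabeling, their conditional probabilities of isomorphism to $T_0$ are all equal to the same universal constant $\rho$, so summing over $B$ produces no net perturbation and only the local $(d_0+1)$ contributions from $A$ remain.
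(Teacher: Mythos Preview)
Your proof is correct and follows essentially the same approach as the paper's: both argue that revealing $X_i$ can alter $E[I_j\mid\cdots]$ only when $j=i$ or $j$ is an ancestor of $i$ at distance at most $d_0$, yielding $C=d_0+1$. You are in fact more explicit than the paper about the BFS label-shift issue for indices $j>i$ (the paper simply asserts $T(j)$ has ``no involvement'' with $X_i$); one small quibble is that your phrase ``generated from the values $\{X_j:j\ge k\}$'' is not literally true for exactly that label-shift reason --- the correct point, which you do supply in your final paragraph, is that the \emph{conditional law} of $T(k)$ given $X_1,\ldots,X_i$ is the Galton--Watson law regardless.
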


\begin{proof}
For $1 \leq i \leq \lfloor \epsilon^{d_{0}} s \rfloor$, fix a sequence $\vec{x} = (x_{1}, \ldots x_{i-1}) \in (\mathbb{N} \cup \{0\})^{i-1},$ and then 
consider $$y_{i} = E[Y|X_{1} = x_{1}, \ldots X_{i-1} = x_{i-1}, X_{i}] = \sum_{1 \leq j \leq \lfloor \epsilon^{d_{0}} s \rfloor}E[I_{j}| X_{1} = x_{1}, \ldots X_{i-1} = x_{i-1}, X_{i}]$$ 
and
$$y_{i-1} = E[Y|X_{1} = x_{1}, \ldots X_{i-1} = x_{i-1}] = \sum_{1 \leq j \leq \lfloor \epsilon^{d_{0}} s \rfloor} E[I_{j} | X_{1} = x_{1}, \ldots X_{i-1} = x_{i-1}].$$

\par $I_{j}$ will be affected by the extra information about $X_{i}$ only if either $j=i$ or node $j$ is an ancestor 
of node $i$ at distance $\leq d_{0}$ from $i$. If $j = i$, then it will of course affect the conditional expectation 
because $X_{i}$ gives the number of children of $j$ in that case. When $j > i$, this is immediate, because any subtree 
rooted at $j$ has no involvement of $X_{i}$. When $j < i$, but not an ancestor of $i$, $i$ is not a part of the 
subtree $T(j)$ rooted at $j$. Therefore $X_{i}$, the number of children of node $i$, does not contribute 
anything to the probability of the presence of $T_{0}$ rooted at $j$. When $j$ is an ancestor of $i$ 
but at distance $> d_{0}$ from $i$, $i$ won't be a part of the subtree $T(j)|_{d_{0}}$ at all.

\par When $j$ is an ancestor of $i$ and at 
distance $d_{0}$ from $i$, then $i$ is a leaf node of $T(j)|_{d_{0}}$ and therefore $X_{i}$, the number of 
children of $i$, will actually play a role, because to ensure that $T(j) \cong T_{0}$, the leaf nodes of $T(j)|_{d_{0}}$ must have no children of their own in $T_{\lam}^{for}$.

\par That is, we need be concerned with the at most $d_{0}$ ancestors of node $i$, plus $i$ iteself, and 
for each of them, the 
difference in the conditional expectations of $I_{j}$ can be at most $1$. Denoting by $\sum^{*}$ the sum 
over $j = i$ and $j$ an ancestor of $i$ at distance $\leq d_{0}$ from $i$, this gives us:
\begin{align}
|y_{i} - y_{i-1}| =& \left| \sum^{*} E[I_{j}|X_{1} = x_{1}, \ldots X_{i-1} = x_{i-1}, X_{i}] - E[I_{j}|X_{1} = x_{1}, \ldots X_{i-1} = x_{i-1}] \right| \nonumber\\
\leq & \sum^{*}\left| E[I_{j}|X_{1} = x_{1}, \ldots X_{i-1} = x_{i-1}, X_{i}] - E[I_{j}|X_{1} = x_{1}, \ldots X_{i-1} = x_{i-1}] \right| \nonumber\\
\leq & d_{0}+1. \nonumber
\end{align}
The final inequality follows from the argument above that $\sum^{*}$ involves summing over at most $d_{0}+1$ many terms, and each summand is at most $1$, since each summand is the difference of the expectations of indicator random variables. This proves Lemma \ref{lipschitz}, with $C = d_{0}+1$.

\end{proof}

\par Given Lemma \ref{lipschitz} we  apply  Azuma's inequality. Consider the martingale
$$Y'_{i} = \frac{E[Y] - Y_{i}}{d_{0}+1}, \quad 0 \leq i \leq s.$$
Set, for a typical node $v$ in a random Galton-Watson tree $T$ with $Poisson(\lam)$ offspring distribution, 
\begin{equation}\label{setp0}
\Pr[T(v) \cong T_{0}] = p_{0},
\end{equation}  so that 
$E[Y] = \lfloor \epsilon^{d_{0}}  s\rfloor p_{0}$. Applying Azuma's inequality to $\{Y'_{i}, 0 \leq i \leq s\}$, for any $\beta > 0$,
$$\Pr[Y'_{s} > \beta \sqrt{s}] < e^{-\beta^{2}}.$$ We choose $$\beta = \frac{\epsilon^{d_{0}} p_{0} \sqrt{s}}{2 (d_{0}+1)}.$$ This gives
\begin{equation}
\Pr[Y_{s} < \frac{\epsilon^{d_{0}} p_{0}}{2} \cdot s - p_{0}] < \exp\left\{-\frac{\epsilon^{2d_{0}} p_{0}^{2}}{4 (d_{0}+1)^{2}} \cdot s \right\}.
\end{equation}
Writing $$\xi = \frac{\epsilon^{d_{0}} p_{0}}{2}, \quad \varphi = \frac{\epsilon^{2(d_{0})} p_{0}^{2}}{4 (d_{0}+1)^{2}},$$
we can rewrite the above inequality as
\begin{equation} \label{upper bound for Ys}
\Pr[Y_{s} < \xi s - p_{0}] < e^{-\varphi s}.
\end{equation}

\par Putting everything together, we get for all $s$ large enough:
\begin{align}
\Pr[Y = 0] =& \Pr[Y = 0, Y_{s} = Y] + \Pr[Y = 0, Y_{s} \neq Y] \nonumber\\
\leq & \Pr[Y_{s} < \xi s - p_{0}] + \Pr[Y_{s} \neq Y] \nonumber\\
\leq & e^{-\varphi s} + \sum_{i=0}^{d_{0}-2}e^{-\epsilon^{i} \eta s}; \quad \text{from } \eqref{globalbad upperbound} \text{ and } \eqref{upper bound for Ys}; \nonumber
\end{align}
which is an upper bound exponentially small in $s$. This gives us the proof of Theorem \ref{A rapidly determined}.

\section{Universal trees exist!} \label{sectionuniv}
\par In this section, we shall establish sufficient conditions that guarantee the existence of \emph{universal trees}. 
Fixing $k \in \mathbb{N}$, set $M_{0} = 2 \cdot 3^{k+1}$ as in \eqref{M value}. {\em Assume} $T_{0}$ is a finite tree 
with root $R_{0}$ with the following properties:
\begin{enumerate}
\item \label{point 1} For every $\sigma \in \Sigma_{M_{0}; k}$, there are distinct nodes 
$v_{i; \sigma} \in T_{0}, 1 \leq i \leq k$, 
with the following conditions satisifed: for every $\sigma \in \Sigma_{M_{0}; k}$ and every $1 \leq i \leq k$, we have
\begin{equation} \label{distance from root of universal tree}
d(R_{0}, v_{i; \sigma}) > 3^{k+2};
\end{equation}
for every $\sigma_{1}, \sigma_{2} \in \Sigma_{M_{0}; k}$ and $1 \leq i_{1}, i_{2} \leq k$, with $(\sigma_{1}, i_{1}) \neq (\sigma_{2}, i_{2})$, we have
\begin{equation} \label{distance from each other}
d(v_{i_{1}; \sigma_{1}}, v_{i_{2}; \sigma_{2}}) > 3^{k+2};
\end{equation}
and for all $1 \leq i \leq k, \sigma \in \Sigma_{M_{0}; k}$,
\begin{equation} \label{each equivalence class represented}
B(v_{i; \sigma}; 3^{k+1}) \in \sigma.
\end{equation}

\item \label{point 2}For every $1 \leq i \leq k$, every choice of $u_{1}, \ldots u_{i-1} \in T_{0}$, and every choice of $\sigma \in \Sigma_{M_{0};k}$, there exists a vertex $u_{i} \in T_{0}$ such that 
\begin{equation} \label{distance from previous vertices}
d\left(u_{i}, u_{j}\right) > 3^{k+2}, \text{ for all } 1 \leq j \leq i-1,
\end{equation}
\begin{equation} \label{distance from root}
d\left(R_{0}, u_{i}\right) > 3^{k+2},
\end{equation}
and 
\begin{equation}\label{copy of each equivalence class}
B\left(u_{i}; 3^{k+1}\right) \in \sigma.
\end{equation}

\end{enumerate}

\begin{rem}
Observe that Condition (\ref{point 2}) is stronger than Condition (\ref{point 1}) and actually implies the latter. However, for pedagogical clarity, and since (\ref{point 1}) gives a nice structural description of the \emph{Christmas tree} that is described in Theorem \ref{xmastree}, we retain (\ref{point 1}). Furthermore, we state (\ref{point 1}) before (\ref{point 2}) since, we feel, it is an easier condition to visualize.
\end{rem}

\begin{lem} \label{sufficient condition for universal tree}
$T_{0}$ with properties described above will be a universal tree.
\end{lem}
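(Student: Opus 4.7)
The plan is to exhibit an explicit winning strategy for Duplicator in the $k$-round standard Ehrenfeucht game $EHR[T_1,T_2;k]$, by combining two ingredients. First, hypothesis \eqref{neighbourhood around the roots} gives a Duplicator winning strategy $\mathcal{D}_0$ for the $(M_0;k)$-distance-preserving game on the pair of root-balls $B_{T_i}(R_i;3^{k+1})$. Second, properties (\ref{point 1})--(\ref{point 2}) of $T_0$ together with $T_1(u_1)\cong T_2(u_2)\cong T_0$ guarantee that, on demand, one can produce in $T_2(u_2)$ a vertex of any prescribed $(M_0;k)$-type at distance $>3^{k+2}$ from any specified finite set of previously chosen vertices. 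Fix once and for all an isomorphism $\phi:T_1(u_1)\to T_2(u_2)$ through $T_0$.

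The strategy is ``shrinking-radii'': define $\rho_j=3^{k-j+1}$, so $\rho_0=3^{k+1}$ matches the root-ball radius and $\rho_k=3$ is large enough to witness equality and parent-child adjacency. Duplicator maintains a partition of the played pairs into clusters, each anchored at a pair of centers $(c,c')\in T_1\times T_2$ whose radius-$3^{k+1}$ balls are $(M_0;k)$-equivalent and whose anchors are pairwise at distance $>3^{k+2}$ and at distance $>3^{k+2}$ from the respective root. The distinguished ``near-root'' cluster is anchored at $(R_1,R_2)$ and runs $\mathcal{D}_0$ as its local subgame; every ``far'' cluster is anchored at a pair produced by property~(\ref{point 2}) and runs its own local $(M_0;k)$-subgame.

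When Spoiler plays $x$ (WLOG in $T_1$) in round $j+1$, Duplicator classifies: (a) if $x$ lies in the radius-$3^{k+1}$ ball around some prior center $c$, invoke that cluster's local subgame to obtain $y\in B_{T_2}(c';3^{k+1})$; (b) otherwise, open a new cluster. In case (b), set $c_{j+1}=x$ and obtain $c'_{j+1}=y$ by applying property~(\ref{point 2}) inside $T_2(u_2)\cong T_0$ with the at most $j<k$ prior far centers (lifted via $\phi^{-1}$) as forbidden vertices and $\sigma\in\Sigma_{M_0;k}$ equal to the $(M_0;k)$-type of $B_{T_1}(x;3^{k+1})$; the resulting vertex $y$ is at distance $>3^{k+2}$ from $u_2$ (hence from $R_2$, since the path from $y$ to $R_2$ in the tree passes through $u_2$), at distance $>3^{k+2}$ from all prior far centers in $T_2(u_2)$ (hence in $T_2$, since $T_2(u_2)$ is a subtree), and has $B_{T_2}(y;3^{k+1})$ of type $\sigma$. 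Duplicator then spawns the fresh $(M_0;k)$-subgame on the new pair of balls, with $(x,y)$ playing the role of the designated anchors.

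After round $k$, Duplicator wins because intra-cluster distance-preservation gives correct equality, root-indicator, and parent-child relations within each cluster, and the anchor separation $>3^{k+2}$ in each tree forces all inter-cluster distances to exceed $3$, hence no equality, root-coincidence, or parent-child relation crosses clusters. The main obstacle will be the ``shrinking-radii'' bookkeeping at ball boundaries: in particular, a Spoiler move sitting just outside a cluster-ball could a priori be adjacent to a played point inside, so one must verify that the factor-of-$3$ gap between the ball radius $3^{k+1}$ and the required anchor separation $3^{k+2}$ (dictated by the universal tree definition, and mirrored inside $T_0$ by property~(\ref{point 2})) is enough to absorb such boundary phenomena and to sustain the inductive shrinking at each round; one must also verify that property~(\ref{point 2}), which is a statement purely about $T_0$, translates via $\phi$ into the required separation guarantees in the ambient $T_2$, a routine consequence of $\phi$ being a rooted-tree isomorphism and $d(R_2,u_2)>3^{k+2}$.
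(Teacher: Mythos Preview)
Your overall architecture matches the paper's: classify each Spoiler move as ``inside'' (near something already played) or ``outside'' (far from everything), run a local $(M_0;k)$-subgame on inside moves, and invoke property~(\ref{point 2}) inside the copy of $T_0$ for outside moves. The use of $\phi$ to transfer property~(\ref{point 2}) from $T_0$ to $T_2(u_2)$, together with $d(R_2,u_2)>3^{k+2}$, is also how the paper handles separation in $T_2$.

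However, the concrete strategy you wrote down has a real gap, and it is exactly the ``main obstacle'' you flag but do not resolve. Your case split---``$x$ lies in the radius-$3^{k+1}$ ball around some prior \emph{center}''---uses a \emph{fixed} radius, and your stated invariant asks that anchors be at mutual distance $>3^{k+2}$ in \emph{both} trees. The latter cannot be maintained: in case~(b) Spoiler chooses the new $T_1$-anchor and may place it at distance exactly $3^{k+1}$ from $R_1$. Worse, the fixed-radius rule loses outright. In round~$1$ let Spoiler play $x_1$ at distance $3^{k+1}-1$ from $R_1$; this is case~(a), root cluster, and $\mathcal{D}_0$ gives $y_1$ with $d(y_1,R_2)=3^{k+1}-1$. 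In round~$2$ let Spoiler play $x_2$, a child of $x_1$, at distance $3^{k+1}$ from $R_1$. Since the ball is open, $x_2\notin B_{T_1}(R_1;3^{k+1})$, so your rule forces case~(b): a new cluster with $y_2$ at distance $>3^{k+2}$ from $R_2$. Now $\pi(x_1,x_2)$ holds in $T_1$, but $d(y_1,y_2)>3^{k+2}-(3^{k+1}-1)>1$, so $\pi(y_1,y_2)$ fails and Duplicator has already lost.

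The paper's fix is precisely the shrinking-radii device you name but never deploy. At round $i$ the inside/outside test is whether $x_i\in B(x_j;2\cdot 3^{k+1-i})$ for \emph{some} $0\le j<i$ (all prior points, not only anchors), and the invariant maintained is
\[
B(x_i;3^{k+1-i})\equiv_{M_0;k}B(y_i;3^{k+1-i}),
\]
with the further requirement that any earlier $x_j$ lying in $B(x_i;3^{k+1-i})$ corresponds, under the local winning strategy, to $y_j\in B(y_i;3^{k+1-i})$. In the inside case $d(x_i,x_l)<2\cdot 3^{k+1-i}$ with $l<i$ gives $B(x_i;3^{k+1-i})\subset B(x_l;3^{k+1-l})$ (since $2\cdot 3^{k+1-i}+3^{k+1-i}=3^{k+2-i}\le 3^{k+1-l}$), so the subgame nests. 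In the outside case $d(x_i,x_j)\ge 2\cdot 3^{k+1-i}$ forces $x_j\notin B(x_i;3^{k+1-i})$ for every $j<i$, and one chooses $y_i$ via property~(\ref{point 2}) far from \emph{all} prior $y_j$. Crucially, there is \emph{no} symmetric $>3^{k+2}$ separation invariant on the $T_1$ side; the shrinking thresholds are what absorb the boundary phenomenon you identified. Until you actually execute this bookkeeping---you introduce $\rho_j=3^{k-j+1}$ but never use it---the argument is a sketch rather than a proof.
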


\begin{proof}
Recall the definition of universal trees. We start with two trees $T_{1}, T_{2}$ with roots $R_{1}, R_{2}$, and which satisfy the following conditions:
\begin{enumerate}
\item The balls $B(R_{1}; 3^{k+1}), B(R_{2}; 3^{k+1})$ satisfy
\begin{equation} \label{around the root}
B(R_{1}; 3^{k+1}) \equiv_{M_{0}; k} B(R_{2}; 3^{k+1}).
\end{equation}
\item For some $u_{1} \in T_{1}, u_{2} \in T_{2}$ such that
\begin{equation} \label{distance of universal tree from root}
d(R_{1}, u_{1}) > 3^{k+2}, \quad d(R_{2}, u_{2}) > 3^{k+2},
\end{equation}
we have each of $T_{1}(u_{1})$ and $T_{2}(u_{2})$ isomorphic to $T_{0}$. If $\varphi_{1} : T_{0} \rightarrow T_{1}(u_{1}), \varphi_{2}: T_{0} \rightarrow T_{2}(u_{2})$ are these isomorphisms, then $$\varphi_{1}(v_{i; \sigma}) = v_{i; \sigma}^{(1)}, \quad \varphi_{2}(v_{i; \sigma}) = v_{i; \sigma}^{(2)},$$ for all $\sigma \in \Sigma_{M_{0}; k}, 1 \leq i \leq k$.

\end{enumerate}

\par Now we give a winning strategy for the Duplicator. We assume that since $R_{1}, R_{2}$ are designated vertices, $x_{0} = R_{1}, y_{0} = R_{2}$. Let $(x_{i}, y_{i})$ be the pair chosen from $T_{1} \times T_{2}$ in the $i$-th move, for $1 \leq i \leq k$. Now, we claim the following:\\

\par The Duplicator can play the game such that, for each $0 \leq i \leq k$,
\begin{itemize}
\item he can maintain $$B(x_{i}; 3^{k+1-i}) \equiv_{M_{0}; k} B(y_{i}; 3^{k+1-i}),$$
(Our proof only needs $$B(x_{i}; 3^{k+1-i}) \equiv_{M_{0}; k-i} B(y_{i}; 3^{k+1-i}),$$ but the stronger assumption is a bit
more convenient);
\item for all $0 \leq j < i$ such that $x_{j} \in B(x_{i}; 3^{k+1-i})$, the corresponding $y_{j} \in B(y_{i}; 3^{k+1-i})$, and vice versa, according to the winning strategy of $EHR_{M_{0}}[B(x_{i}; 3^{k+1-i}), B(y_{i}; 3^{k+1-i}); k]$. Again, this is overkill as one need only consider the
Ehrenfeucht game of $k-i$ moves at this point.
\end{itemize}

\vspace{0.1in}

\par We prove this using induction on the number of moves played so far. For $i = 0$, we have chosen $x_{0} = R_{1}, y_{0} = R_{2}$, and we already have imposed the condition $$B(R_{1}; 3^{k+1}) \equiv_{M_{0}; k} B(R_{2}; 3^{k+1})$$ in \eqref{around the root}. So suppose the claim holds for $0 \leq j \leq i-1$. Without loss of generality suppose Spoiler chooses $x_{i} \in T_{1}$. There are two possibilities:
\begin{enumerate}
\item \emph{Inside move:}
\begin{equation} \label{inside move condition}
x_{i} \in \bigcup_{j=0}^{i-1} B(x_{j}; 2 \cdot 3^{k+1-i}).
\end{equation}
So $x_{i} \in B(x_{l}; 2 \cdot 3^{k+1-i})$ for some $0 \leq l \leq i-1$. By the induction hypothesis, 
$$B(x_{l}; 3^{k+1-l}) \equiv_{M_{0}; k} B(y_{l}; 3^{k+1-l}).$$ 
Duplicator now follows his winning strategy of $EHR_{M_{0}}[B(x_{l}; 3^{k+1-l}), B(y_{l}; 3^{k+1-l}); k]$ and picks $y_{i} \in B(y_{l}; 3^{k+1-l})$. This means that, $$d(x_{i}, x_{l}) < 2 \cdot 3^{k+1-i} \quad \Rightarrow \quad B(x_{i}; 3^{k+1-i}) \subset B(x_{l}; 3^{k+1-l}),$$ since $l < i$. In the same way $$B(y_{i}; 3^{k+1-i}) \subset B(y_{l}; 3^{k+1-l}),$$ and further, $$B(x_{i}; 3^{k+1-i}) \equiv_{M_{0}; k} B(y_{i}; 3^{k+1-i}).$$ 
This last relation follows from the fact that $y_{i}$ is chosen corresponding to $x_{i}$ in the winning strategy of the Duplicator for $EHR_{M_{0}}[B(x_{l}; 3^{k+1-l}), B(y_{l}; 3^{k+1-l}); k]$. Since $M_{0}$, as chosen in Equation \eqref{M value}, is greater than $2 \cdot 3^{k+1-i}$, hence for Duplicator to win $EHR_{M_{0}}[B(x_{l}; 3^{k+1-l}), B(y_{l}; 3^{k+1-l}); k]$, he must be able to win the game played within the smaller balls $B(x_{i}; 3^{k+1-i})$ and $B(y_{i}; 3^{k+1-i})$.

\item \emph{Outside move:}
\begin{equation} \label{outside move condition}
x_{i} \notin \bigcup_{j=0}^{i-1} B(x_{j}; 2 \cdot 3^{k+1-i}).
\end{equation}
Then we consider $B(x_{i}; 3^{k+1-i})$ and we know, from \eqref{distance from previous vertices}, \eqref{distance from root} and \eqref{copy of each equivalence class}, that there exists some $v \in T_{2}$ such that $$d(v, y_{l}) > 3^{k+2}, \quad \text{for all } 0 \leq l \leq i-1,$$ and $$B(v; 3^{k+1}) \equiv_{M_{0}; k} B(x_{i}; 3^{k+1}).$$ We choose $y_{i} = v$. Note that then we automatically have $$B(y_{i}; 3^{k+1-i}) \bigcap \left\{\bigcup_{j=0}^{i-1} B(y_{j}; 3^{k+1-i})\right\} = \phi,$$ and $$B(x_{i}; 3^{k+1-i}) \equiv_{M_{0}; k} B(y_{i}; 3^{k+1-i}).$$
Once again, Duplicator is choosing $y_{i}$ so that $B(y_{i}; 3^{k+1}) \equiv_{M_{0}; k} B(x_{i}; 3^{k+1})$, i.e. he wins $$EHR_{M_{0}}\left[B(x_{i}; 3^{k+1}), B(y_{i}; 3^{k+1}); k\right].$$ Then he must be able to win the game within the smaller balls $B(x_{i}; 3^{k+1-i})$ and $B(y_{i}; 3^{k+1-i})$, since his winning involves being able to preserve mutual distances of pairs of nodes up to $M_{0}$.

\end{enumerate}

\par This shows that the Duplicator will win $EHR[T_{1}, T_{2}; k]$, which finishes the proof.

\end{proof}

\begin{thm}\label{xmastree}
For each $k\in \mathbb{N}$ there is a universal tree $T$.
\end{thm}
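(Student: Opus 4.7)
The plan is to apply Lemma \ref{sufficient condition for universal tree}: it suffices to exhibit a finite rooted tree $T_{0}$ satisfying its Conditions (1) and (2). I would build $T_{0}$ as a ``Christmas tree''---a root $R_{0}$ hanging many long branches, each terminating in a subtree that realizes one prescribed $(M_{0};k)$-equivalence class around a deep vertex.

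First, for each class $\sigma \in \Sigma_{M_{0};k}$ that is realized by some ball whose designated center has a parent inside the ball (the only classes relevant to the outside moves in the proof of Lemma \ref{sufficient condition for universal tree}, since vertices beyond distance $3^{k+2}$ from $R_{0}$ automatically carry their parent inside their radius-$3^{k+1}$ ball), I would fix a concrete representative, i.e.\ a tree $\widehat{T}_{\sigma}$ with designated vertex $\widehat{v}_{\sigma}$ satisfying $B_{\widehat{T}_{\sigma}}(\widehat{v}_{\sigma}; 3^{k+1}) \in \sigma$. To put $\widehat{v}_{\sigma}$ deep, I graft above the topmost ancestor of $\widehat{v}_{\sigma}$ appearing in this ball a fresh path of length at least $3^{k+2}$, producing a finite rooted tree $T_{\sigma}$ whose root is the top of the grafted path. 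The appended vertices lie at distance at least $3^{k+1}$ from $\widehat{v}_{\sigma}$, so $B_{T_{\sigma}}(\widehat{v}_{\sigma}; 3^{k+1})$ coincides with $B_{\widehat{T}_{\sigma}}(\widehat{v}_{\sigma}; 3^{k+1})$ and still lies in class $\sigma$, while $\widehat{v}_{\sigma}$ now sits at depth exceeding $3^{k+2}$.

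Let $N$ be the number of classes being realized. Next I assemble $T_{0}$ by giving $R_{0}$ a total of $kN$ children and attaching to distinct children $k$ independent copies of each $T_{\sigma}$; write $v_{i;\sigma}$ for the image of $\widehat{v}_{\sigma}$ in the $i$-th copy of $T_{\sigma}$. By construction every $v_{i;\sigma}$ has depth exceeding $3^{k+2}$, and any two distinct such vertices lie in distinct $R_{0}$-branches and so have mutual distance exceeding $2 \cdot 3^{k+2}$. Because $v_{i;\sigma}$ is so deep, its radius-$3^{k+1}$ ball in $T_{0}$ is confined to the single copy of $T_{\sigma}$ containing it, hence still belongs to $\sigma$; this verifies Condition (1). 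For Condition (2), given arbitrary $u_{1},\ldots,u_{i-1} \in T_{0}$ with $i \leq k$ and a target $\sigma$, each $u_{j}$ meets at most one $R_{0}$-branch, so at most $i-1 \leq k-1$ of the $k$ copies of $T_{\sigma}$ are ``touched''; picking $u_{i}$ to be $v_{i';\sigma}$ in an untouched copy forces every $u_{j}\to u_{i}$ path to pass through $R_{0}$ and thus have length exceeding $3^{k+2}$.

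The one step requiring genuine care is the upward extension: I must check that the grafted path sits strictly outside the radius-$3^{k+1}$ ball around $\widehat{v}_{\sigma}$, because only then is the equivalence class of the ball preserved. This reduces to a distance inequality, but is the single place where the separation $3^{k+2} > 2 \cdot 3^{k+1}$ plays an essential role; every subsequent verification (depth bounds, branch disjointness, non-interference between different copies) is routine bookkeeping with the triangle inequality on a tree.
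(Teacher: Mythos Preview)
Your proposal is essentially the paper's own proof: both build a ``Christmas tree'' by hanging, for each class $\sigma\in\Sigma_{M_0;k}$ and each $1\le i\le k$, a copy of a representative ball from the root via a long path, and then verify Conditions~(\ref{point 1}) and~(\ref{point 2}) by the pigeonhole argument that at most $k-1$ branches can be occupied. The only cosmetic differences are your path length $3^{k+2}$ versus the paper's $3^{k+4}$, and your explicit restriction to classes whose center has a parent in the ball (a point the paper leaves implicit when it declares the top vertex $w_{i;\sigma}$ to be the ancestor at distance $3^{k+1}$); both versions share the same small imprecision about ensuring the grafted path lies outside the radius-$3^{k+1}$ ball.
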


\begin{proof}
\par $T$ will be a \emph{Christmas tree} which is constructed as follows.  For each $\sig\in \Sig_{M_0:k}$ select and fix a specific
ball $B(v;3^{k+1})\in \sig$.  For each such $\sig$ and each $1\leq i\leq k$ create
disjoint copies $T_{i,\sig} = B(v_{i,\sig};3^{k+1})$ such that $B(v_{i; \sig}; 3^{k+1}) \cong B(v; 3^{k+1})$, with the isomorphism mapping $v_{i; \sig}$ to $v$. These $B(v_{i; \sig}; 3^{k+1})$ are the \emph{balls} decorating the Christmas tree. Let $w_{i; \sig}$ be the \emph{top} vertex of $B(v_{i;\sig};3^{k+1})$. That is, it is that unique node in the ball with no ancestor in the ball. It can be seen that this node is actually the ancestor of $v_{i; \sig}$ which is at distance $3^{k+1}$ away from $v_{i; \sig}$, or in other words, $v_{i; \sig}$ is a $3^{k+1}$-descendant of this node. Let $R$ be the root of $T$. Draw disjoint paths of length $3^{k+4}$ from $R$ to each $w_{i; \sig}$. These will be like the \emph{strings} attaching the balls to the Christmas tree. 

\par We now explain why this $T$ satisfies Conditions (\ref{point 1}) and (\ref{point 2}). Once again, for pedagogical clarity, we first show a detailed reasoning why $T$ satisfies (\ref{point 1}), although technically, it suffices to verify only (\ref{point 2}). First, observe that the $v_{i; \sigma}$ we have defined in the previous paragraph, for $1 \leq i \leq k$ and $\sigma \in \Sigma_{M_{0};k}$, immediately satisfy \eqref{distance from root of universal tree} and \eqref{distance from each other}, since $$d\left(R, v_{i; \sigma}\right) = d\left(R, w_{i; \sigma}\right) + d\left(v_{i; \sigma}, w_{i; \sigma}\right) = 3^{k+4} + 3^{k+1} > 3^{k+2},$$
for every $\sigma_{1}, \sigma_{2} \in \Sigma_{M_{0};k}, 1 \leq i_{1}, i_{2} \leq k$ with $(\sigma_{1}, i_{1}) \neq (\sigma_{2}, i_{2})$, we indeed have
$$d\left(v_{i_{1};\sigma_{1}}, v_{i_{2};\sigma_{2}}\right) = d\left(v_{i_{1};\sigma_{1}}, R\right) + d\left(R, v_{i_{2};\sigma_{2}}\right) > 2\cdot 3^{k+4} > 3^{k+2}.$$
To see that \eqref{each equivalence class represented} holds, note that by our construction, $$B(v_{i; \sig}; 3^{k+1}) \cong B(v; 3^{k+1}) \in \sigma,$$ with $v_{i;\sigma}$ mapped to $v$, for all $1 \leq i \leq k$, and for all $\sigma \in \Sig_{M_0:k}$. 
\par Finally, we verify that (\ref{point 2}) holds. Consider any $1 \leq j \leq k$. Suppose we have selected any $j-1$ vertices $u_{1}, \ldots u_{j-1}$ from $T$. For any $\sigma \in \Sig_{M_0:k}$ and $1 \leq i \leq k$, we consider the \emph{branch} of the tree consisting of the ball $B\left(v_{i;\sigma}; 3^{k+1}\right)$ and the string joining $R$ to $w_{i;\sigma}$, and we call that branch \emph{free} if no $u_{l}, 1 \leq l \leq j-1$ is picked from that branch. Since there are $k$ copies of balls for each $\sigma$, and $j \leq k$, hence we shall always have at least one \emph{free} branch from each $\sigma \in \Sig_{M_0:k}$. So we simply choose $u_{j} = v_{i;\sigma}$ for some $i$ such that the corresponding branch is free. 

\par Since no $u_{l}, \ 1 \leq l \leq j-1$, belongs to that branch, each of them must be at least as far away from $u_{j}$ as the root is from $v_{i;\sigma}$. That is, we will have $$d\left(u_{j}, u_{l}\right) > 3^{k+4}+3^{k+1}; \quad d\left(u_{j}, R\right) = 3^{k+4}+3^{k+1}.$$ And of course, by our choice, we would have $B\left(u_{j}; 3^{k+1}\right) \in \sigma$.\\

\end{proof}

\section{Probabilities conditioned on infiniteness of the tree}\label{sectprob}
As before,  with $R$ the root, $B_{T}(R; i)$ denotes 
the neighbourhood of $R$ with radius $i$, i.e. $$B_{T}(R; i) = \{u \in T: d(u, R) <  i\}.$$ We define 
$$\overline{B_{T}(R; i)} =\{u \in T: d(u, R) \leq  i\}.$$ So, $\overline{B_{T}(R; i)}$ captures up
to the $i$-th generation of the tree, $R$ being the $0$-th generation. For each $i\in \mathbb{N}$ we give a set of
equivalence classes $\Gam_i$ which will be relatively easy to handle and which we show in Theorem \ref{refine}
is a refinement of $\Sig_{i:k}$. 
We set
\begin{equation}\label{setc}
C = \{0,1,\ldots,k-1,\omega\}.
\end{equation}
Here $\omega$ is a special symbol with the meaning ``at least $k$."  That is, to say that there are $\omega$ copies
of someting is to say that there are at least $k$ copies.  We set
\begin{equation}\label{gamma1}
\Gam_1 = C = \{0,1,\ldots,k-1,\omega\}.
\end{equation}
A $\overline{B_T(R;1)}$ is of type $i\in \Gam_1$ if the root has $i$ children.  Since the game has $k$ rounds, if the roots
has $x,y$ children in the two trees with both $x,y\geq k$ then Duplicator wins the modified game. 
Inductively
we now set
\begin{equation}\label{gammai}
\Gam_{i+1} = \{g: \Gam_i\ra C \}.
\end{equation}
Each child $v$ of the root generates a tree to generation $i$.  This tree belongs to an equivalence class
$\sig\in \Gam_i$.  A $\overline{B_T(R;i+1)}$ has state $g\in \Gam_{i+1}$ if for all $\sig\in \Gam_i$ the root 
has $g(\sig)$ children $v$ whose subtree $T(v)$ upto generation $i$ belongs to equivalence class $\sig$, i.e. $T(v)|_{i} \in \sig$.
\\ 
\begin{example} \label{example of neighbourhood}
Consider $k=4,i=2$. Then a typical example of $\overline{B_{T}(R; i)}$ will be: the root has two children with no chidren, at least four children with one child, three children with two children, no children with three children, and one child with
at least four children. Thus $g(0) = 2, g(1) = \omega, g(2) = 3, g(3) = 0, g(\omega) = 1$.
\end{example}

\begin{thm}\label{refine} $\Gam_i$ is a refinement on $\Sig_{i:k}$.  \end{thm}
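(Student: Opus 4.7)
The plan is to proceed by induction on $i$, with the inductive claim that any two rooted trees $T_1, T_2$ sharing the same $\Gamma_i$-type admit a Duplicator winning strategy in the $k$-round modified Ehrenfeucht game played on $\overline{B_{T_1}(R_1; i)}$ and $\overline{B_{T_2}(R_2; i)}$, and hence belong to the same $\Sigma_{i;k}$-class.

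For the base case $i = 1$, the only non-root vertices are children of the root, so pairwise distances among selected vertices take values only in $\{0, 1, 2\}$ and are determined by equalities and parent relations alone. If the common $\Gamma_1$-type is $j < k$, both roots have exactly $j$ children and Duplicator fixes an arbitrary bijection between them. If the type is $\omega$, both have at least $k$ children and Duplicator can always supply an unused child in response to Spoiler, since strictly fewer than $k$ children have been selected on either side before any given round. In both subcases equalities, parent relations, and all pairwise distances are preserved.

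For the inductive step, assume $\Gamma_i$ refines $\Sigma_{i;k}$ and let $T_1, T_2$ share the $\Gamma_{i+1}$-type $g$. For each $\sigma \in \Gamma_i$, partition the children of $R_j$ into $C_j^\sigma$, the set of children $v$ with $T_j(v)|_i$ of $\Gamma_i$-type $\sigma$; by hypothesis $|C_1^\sigma| = |C_2^\sigma|$ when $g(\sigma) < k$, and both are at least $k$ when $g(\sigma) = \omega$. By induction, every pair $(v, w) \in C_1^\sigma \times C_2^\sigma$ yields subtrees $T_1(v)|_i$ and $T_2(w)|_i$ on which Duplicator has a saved $k$-round winning strategy. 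Duplicator's strategy in the outer game maintains a partial injection $\phi$ between touched children of $R_1$ and $R_2$ that preserves $\Gamma_i$-type. He answers a root move with the other root; a move below an already-matched child $v$ by following the saved subtree strategy for $(v, \phi(v))$; and a move below an unmatched child $v$ by first extending $\phi$ with any unused child of the correct $\Gamma_i$-type in the other tree (which exists because fewer than $k$ children have been matched so far on either side), then invoking the corresponding subtree strategy.

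The main technical obstacle will be verifying distance preservation in the composite game. Because the tree is acyclic, any path between vertices in distinct subtrees of the root goes through the root, so the cross-subtree distance between $u_1 \in T_1(v_a)|_i$ and $u_2 \in T_1(v_b)|_i$ for $v_a \neq v_b$ equals $d(u_1, v_a) + d(u_2, v_b) + 2$, while distances to the root are in-subtree depths plus $1$. Since each invoked subtree strategy preserves all within-subtree distances (in particular depths from the subtree root), all composite distances coincide across $T_1$ and $T_2$. This yields the required Duplicator win in the modified Ehrenfeucht game on $\overline{B_{T_1}(R_1; i+1)}$ and $\overline{B_{T_2}(R_2; i+1)}$ for whatever diameter bound $M$ defines $\Sigma_{i+1; k}$, completing the induction.
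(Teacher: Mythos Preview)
Your proof is correct and follows essentially the same inductive approach as the paper: induction on $i$, with Duplicator in the $(i+1)$-game matching children of the root by their $\Gamma_i$-type and then invoking the inductive winning strategies on the corresponding subtrees. Your version is in fact more careful than the paper's sketch---you explicitly justify why an unused child of the right type is always available (via injectivity when $g(\sigma)<k$ and via the $\geq k$ bound when $g(\sigma)=\omega$), and you verify distance preservation across subtrees, which the paper's proof omits but which is formally required by the modified Ehrenfeucht game.
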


\begin{proof} Let $\overline{B_{T_1}(R_1;i)}, \overline{B_{T_2}(R_2,i)}$ lie in the same $\Gam_i$ equivalence class.
It suffices to show that Duplicator wins the $k$-move modified Ehrenfeucht game on these balls. We show this using induction on $i$. 
\par The case $i = 1$ is immediate. Suppose it holds good for all $i' \leq i-1$. In the Ehrenfeucht game let Spoiler select $w_1\in T_1$.  Let $v_1$ be the child of the root such that $w_1$ belongs to the tree generated by $v_1$ up to depth $i-1$, i.e. $T_{1}(v_{1})|_{i-1}$. Duplicator allows Spoiler a free move of $v_1$. Let $\sig$ be the $\Gam_{i-1}$ class for $T_{1}(v_{1})|_{i-1}$.  In $T_2$ Duplicator finds a child $v_2$ of the root $R_{2}$ in $T_{2}$ such that $T_{2}(v_{2})|_{i-1} \in \sig$. Duplicator now moves $v_2$ and then, by induction hypothesis, finds the appropriate response $w_2 \in T_{2}(v_{2})|_{i-1}$ corresponding to $w_{1}$.  For any further moves by the Spoiler with the same $v_1$ or $v_2$, Duplicator plays, inductively, on the two subtrees $T_{1}(v_{1})|_{i-1}, T_{2}(v_{2})|_{i-1}$.  And if Spoiler chooses some $y_{1} \in B_{T_{1}}(R_{1};i) - T_{1}(v_{1})|_{i-1}$, then again we repeat the same procedure as above. There
are only $k$ moves, hence Duplicator can continue in this manner and so wins the Ehrenfeucht game.

\end{proof}

When $\sig\in \Gam_i$ we write $\Pr[\sig],\Pr^*[\sig]$ for the probabilities, in $T_{\lam},
T_{\lam}^*$ respectively, that $\overline{B_T(R,i)}$ is in equivalence class $\sig$.
Let $\Gam=\Gam_s$ with $s=3^{k+1}$.

For any first order $A$ with quantifier depth $k$ let $J_A$ be as in (\ref{JA}).  Applying Theorem \ref{refine}
for each $i\in J_A$ the class $\mathcal{B}_{i}$ splits into finitely many classes $\tau\in \Gam$.  Let
$K_A$ denote the set of such classes.  The equation (\ref{anyA}) can be rewritten as

\begin{equation}\label{anyAgamma} \Pr^*[A] = \sum_{\tau\in K_{A}} \Pr^*[\tau]. \end{equation}

For $0\leq i < k$ set
\begin{equation}\label{Poi}
P_i(x)= \Pr[Po(x)=i ] = e^{-x}\frac{x^i}{i!},
\end{equation}
and set
\begin{equation}\label{Poomega}
P_{\omega}(x)= \Pr[Po(x)\geq k ] =  1 - \sum_{i=0}^{k-1}P_{i}(x).
\end{equation}

We now make use of a special property of the $Poisson$ distribution.
Let $\Omega=\{1,\ldots,n\}$ be some finite state space.  Let
$p_i\geq 0$ with $\sum_{i=1}^n p_i = 1$ be some distribution over $\Omega$.
Suppose $v$ has $Poisson$ mean $\lam$ children and each child 
independently is in state $i$ with probability $p_i$.  The distribution
of the number of children of each type is the same as if for each
$i\in \Omega$ there were $Poisson$ mean $p_i\lam$ children of type $i$
and these values were mutually independent.  For example, assumming
boys and girls equally probable, having $Poisson$ mean $5$ children is
the same as having $Poisson$ mean $2.5$ boys and, independently, having
$Poisson$ mean $2.5$ girls.

The probability, in $T_{\lam}$, that the root has $u$ children
(including $u=\omega$) is then $P_u(\lam)$.  Suppose, by induction,
that $P_{\tau}(x)$ has been defined for all $\tau\in \Gam_i$ such
that $\Pr(\tau)=P_{\tau}(\lam)$.  Let $\sig\in \Gam_{i+1}$ so
that $\sig$ is a function $g: \Gam_i\ra C$.  In $T_{\lam}$ the
root has $Poisson$ mean $\lam$ children and, for each $\tau\in \Gam_i$,
the $i$-generation tree rooted at a child is in the class $\tau$ with
probability $P_{\tau}(\lam)$.  By the special property above we
equivalently say that the root has $Poisson$ mean $\lam P_{\tau}(\lam)$
children of type $\tau$ for each $\tau\in \Gam_i$ and that these
numbers are \emph{mutually independent}.  The probability 
$P_{\sig}(\lam)$ is then the product, over $\tau\in \Gam_i$, of the
probability that a $Poisson$ mean $\lam P_{\tau}(\lam)$ has value
$g(\tau)$.  Setting
\begin{equation}\label{recur}
     P_{\sig}(x) = \prod_{\tau} P_{g(\tau)}(x P_{\tau}(x)),
\end{equation}
we have 
\begin{equation}\label{recur1}
\Pr[\sig] = P_{\sig}(\lam). 
\end{equation}

\begin{example} Continuing Example \ref{example of neighbourhood}, set
$x_i= e^{-\lam}\lam^i/i!$ for $0\leq i < 4$ and $x_{\omega} =
1-\sum_{i=0}^3 x_i$.  The root has no child with three children
with probability $\exp[-x_3\lam]$.  It has one child with at least
four children with probability $\exp[-x_{\omega}\lam](x_{\omega}\lam)$.  It has at
least four children with one child with probability
$1-\exp[-x_1\lam](1+ (x_1\lam) + (x_1\lam)^2/2 + (x_1\lam)^3/6]$.  
It has two children with no children with probability
$\exp[-x_0\lam](x_0\lam)^2/2$.
It has three children with two children with probability
$\exp[-x_2\lam](x_2\lam)^3/6$.
The probability of the
event is then the product of these five values.
\end{example}

While Equation (\ref{recur1}) gives a very full
description of the possible $\Pr[\sig]$ the following less precise
description may be more comprehensible.  

\begin{defn}\label{defnice} Let $\mathcal{F}$ be
the minimal family of function $f(\lam)$ such that
\ben
\item $\mathcal{F}$ contains the identity function $f(\lam)=\lam$ and the constant functions $f_{q}(\lam) = q, q \in \mathbb{Q}$.
\item $\mathcal{F}$ is closed under finite addition, subtraction and multiplication.
\item $\mathcal{F}$ is closed under base $e$ exponentiation.  That
is, if $f(\lam)\in \mathcal{F}$ then $e^{f(\lam)}\in \mathcal{F}$.
\een
We call a function $f(\lam)$ {\em nice} if it belongs to $\mathcal{F}$.
\end{defn}
In Corollary \ref{a54} we show that the probability of any first order property, conditioned on the tree being infinite, is actually such a nice function.

\begin{thm}\label{thmnice}
Then for all $k$ and all $i$, if $\sig\in \Gam_i$ then
$\Pr[\sig]$ is a nice function of $\lam$.
\end{thm}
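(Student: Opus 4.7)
The plan is to prove Theorem \ref{thmnice} by induction on $i$, using the explicit recurrence in Equation \eqref{recur} together with the closure properties baked into Definition \ref{defnice}. The family $\mathcal{F}$ has been designed precisely so that both the base $Poisson$ probabilities and the recursive substitution step stay inside it, so the argument should be essentially mechanical.

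First I would establish the following auxiliary closure fact, which is really the heart of the matter: if $f(\lam)\in \mathcal{F}$, then for every $j\in\{0,1,\ldots,k-1,\omega\}$ the composition $P_{j}(f(\lam))$ also lies in $\mathcal{F}$. For $0\leq j<k$ this is because $P_{j}(y)=e^{-y}\,y^{j}/j!$ is built from $y$ using negation (subtraction from the constant $0$), the rational constant $1/j!$, multiplication (to form $y^{j}$), and base-$e$ exponentiation; each of these operations preserves membership in $\mathcal{F}$. For $j=\omega$, niceness of $P_{\omega}(f(\lam))=1-\sum_{j=0}^{k-1}P_{j}(f(\lam))$ then follows from closure under finite subtraction and addition.

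With this lemma in hand, the induction runs cleanly. For the base case $i=1$, I take $f(\lam)=\lam$, which is the identity and hence in $\mathcal{F}$, so $\Pr[\sig]=P_{\sig}(\lam)$ is nice for every $\sig\in\Gam_{1}=C$. For the inductive step, assume that $P_{\tau}(\lam)\in\mathcal{F}$ for every $\tau\in\Gam_{i}$. Fix $\sig\in\Gam_{i+1}$, viewed as a function $g:\Gam_{i}\to C$. For each $\tau\in\Gam_{i}$ the quantity $\lam P_{\tau}(\lam)$ is nice by closure under multiplication, and then the composition lemma above gives that $P_{g(\tau)}(\lam P_{\tau}(\lam))$ is nice. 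Since $\Gam_{i}$ is a finite set, the product
\begin{equation*}
P_{\sig}(\lam)=\prod_{\tau\in\Gam_{i}}P_{g(\tau)}(\lam P_{\tau}(\lam))
\end{equation*}
is a finite product of nice functions, hence nice, completing the induction in view of \eqref{recur1}.

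There is not really a hard obstacle; the only point that requires a moment of care is making sure that every operation used in unpacking $P_{j}(y)=e^{-y}y^{j}/j!$ is among those permitted in Definition \ref{defnice} (note in particular that since $\mathbb{Q}\subset\mathcal{F}$ via the constant functions, negation is available as subtraction from $0$, and rational scalars like $1/j!$ are legal). Once that bookkeeping is observed, the whole statement reduces to a direct induction on the depth $i$, mirroring exactly the recursive construction of $\Gam_{i+1}$ from $\Gam_{i}$.
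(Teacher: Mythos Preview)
Your proposal is correct and is exactly the argument the paper has in mind: the paper's own proof is the one-line remark that the result is an immediate consequence of the recursion \eqref{recur}, and your induction on $i$ simply unpacks that immediacy using the closure properties in Definition \ref{defnice}.
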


This is an immediate consequence of the recursion (\ref{recur}).

\begin{example} The statement ``the root has no children which
have no children which have no children" is the union of classes
$\sig$ with $k=1$, $i=3$.  It has probability
$\exp[-\lambda \exp[-\lambda \exp[-\lambda]]].$ 
\end{example} 

Let $T_{\lam}^{fin}$ denote
$T_{\lam}$ conditioned on $T_{\lam}$ being finite. For any $k,i$ and any
$\sig\in \Gam_i$ let $\Pr^{fin}[\sig]$ be the probability of event $\sig$
in $T^{fin}$.  Assume $\lam > 1$.
Let $p=p(\lam)$, the probability $T_{\lam}$ is infinite, be given by (\ref{defp}).
By duality, $T_{\lam}^{fin}$ has the same distribution as $T_{q\lam}$, where 
\begin{equation} \label{prob of inf}
q(\lambda) = 1 - p(\lambda) = \Pr[T_{\lambda} \text{ is finite}].
\end{equation}
Thus

\begin{equation}\label{recurfinite}
\Pr^{fin}[\sig] = P_{\sig}(q\lam).
\end{equation}

For any $k,i$ and $\sig\in \Gam_i$ 
\begin{equation}\label{a50}
\Pr[\sig] = \Pr^{fin}[\sig]q + \Pr^*[\sig]p  
\end{equation}
and hence
\begin{equation}\label{a51}
\Pr^*[\sig] = p^{-1}[\Pr[\sig] - \Pr^{fin}[\sig]q].
\end{equation}
For any first order sentence $A$ of quantifier depth $k$, letting $K_A$
be as in (\ref{anyAgamma}),

\begin{equation}\label{a60}
\Pr^*[A] = \sum_{\sig\in K_A} 
 p^{-1}[\Pr[\sig] - \Pr^{fin}[\sig]q].
\end{equation}

Combining previous results gives a description of possible $\Pr^*[A]$.

\begin{thm}\label{a52}
Let $A$ be a first order sentence of quantifier depth $k$.  Let
$K_A$ be as in (\ref{anyAgamma})  Let
\begin{equation}\label{setf}
f(x)=\sum_{\sig\in K_A} P_{\sig}(x).
\end{equation}
Then 
\begin{equation}\label{a53}
\Pr^*[A] = p^{-1}[f(\lam)-qf(q\lam)].
\end{equation}
\end{thm}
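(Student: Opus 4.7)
The proof plan is essentially a bookkeeping exercise combining several identities already established in this section. The natural starting point is the decomposition (\ref{anyAgamma}), which expresses
$$\Pr^{*}[A] = \sum_{\sig \in K_A} \Pr^{*}[\sig],$$
reducing the problem to an analysis of $\Pr^{*}[\sig]$ for each single equivalence class $\sig \in K_A \subseteq \Gam$. This step is justified because Theorem \ref{refine} shows that $\Gam=\Gam_{3^{k+1}}$ refines $\Sig_{3^{k+1}:k}$, so the truth of any quantifier-depth $k$ sentence on $T_{\lam}^{*}$ depends only on the $\Gam$-class of $\overline{B_T(R;3^{k+1})}$.

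For each such $\sig$, I would then invoke (\ref{a51}) to trade the conditional probability for a linear combination of unconditional probabilities,
$$\Pr^{*}[\sig] = p^{-1}\bigl[\Pr[\sig] - q\,\Pr^{fin}[\sig]\bigr],$$
and substitute the explicit formulas from (\ref{recur1}) and (\ref{recurfinite}), namely $\Pr[\sig]=P_{\sig}(\lam)$ and $\Pr^{fin}[\sig]=P_{\sig}(q\lam)$. This yields $\Pr^{*}[\sig] = p^{-1}\bigl[P_{\sig}(\lam)-q P_{\sig}(q\lam)\bigr]$, which already appears in essence as (\ref{a60}).

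Finally, summing this expression over $\sig\in K_A$ and pulling the common factor $p^{-1}$ outside, linearity of the sum collapses $\sum_{\sig\in K_A}P_{\sig}(\lam)$ into $f(\lam)$ and $\sum_{\sig\in K_A}P_{\sig}(q\lam)$ into $f(q\lam)$, delivering the stated formula $\Pr^{*}[A]=p^{-1}[f(\lam)-q f(q\lam)]$. There is no genuine obstacle at this stage; all the nontrivial work has already been carried out: the refinement result (Theorem \ref{refine}) justifies the class decomposition, the recursion (\ref{recur}) produces the explicit formula for each $P_{\sig}$, and the duality $T_{\lam}^{fin}\stackrel{d}{=}T_{q\lam}$ accounts for the $qf(q\lam)$ term. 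The content of Theorem \ref{a52} is precisely that this disparate machinery admits the clean closed form displayed in (\ref{a53}), with $f$ determined by the set $K_A$ of $\Gam$-classes that satisfy $A$.
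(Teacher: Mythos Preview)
Your proposal is correct and matches the paper's approach exactly: the paper introduces Theorem~\ref{a52} with the sentence ``Combining previous results gives a description of possible $\Pr^*[A]$'' and provides no further argument, treating it as an immediate consequence of (\ref{anyAgamma}), (\ref{a51}), (\ref{recur1}), and (\ref{recurfinite}) summed over $K_A$. Your writeup simply makes explicit the bookkeeping the paper leaves implicit.
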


As before, it is also convenient to give a slightly weaker form.

\begin{cor}\label{a54}  For any first order sentence $A$ we may
express
\begin{equation}\label{a55}
\Pr^*[A] = p^{-1}[f(\lam)-qf(q\lam)]
\end{equation}
where $f$ is a nice function in the sense of Definition \ref{defnice}.
\end{cor}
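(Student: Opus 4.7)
The plan is to read the corollary as an immediate consequence of Theorem \ref{a52} together with Theorem \ref{thmnice}. Given any first order sentence $A$, let $k$ be its quantifier depth. Theorem \ref{a52} applied at this $k$ already produces the decomposition
$$\Pr^*[A] = p^{-1}\bigl[f(\lam)-qf(q\lam)\bigr]$$
with the explicit choice $f(x) = \sum_{\sig\in K_A} P_\sig(x)$. All that remains is to verify that this particular $f$ lies in the family $\mathcal{F}$ of Definition \ref{defnice}.

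Since $K_A \subseteq \Gam = \Gam_{3^{k+1}}$ is a finite set, $f$ is a finite sum of functions $P_\sig$. By Theorem \ref{thmnice} each such $P_\sig(x)$ is nice, and $\mathcal{F}$ is closed under finite addition, so $f$ itself is nice. This closes the proof.

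If one wishes to make the invocation of Theorem \ref{thmnice} explicit, the recursion \eqref{recur} furnishes a proof by induction on $i$. The base functions $P_j(x) = e^{-x}x^j/j!$ for $0 \leq j < k$ are nice, being built from the identity, rational constants, subtraction (inside the exponent), multiplication, and base-$e$ exponentiation; the function $P_\omega(x) = 1 - \sum_{j=0}^{k-1}P_j(x)$ is then nice by closure under addition and subtraction. In the inductive step one observes that $\mathcal{F}$ is closed under the substitution $h(x) \mapsto P_j(h(x))$, because $P_j(h)$ is obtained from $h$ by multiplication, subtraction and exponentiation. Applied to $h(x) = xP_\tau(x)$, which is nice by induction combined with closure under multiplication, this shows that each factor $P_{g(\tau)}\bigl(xP_\tau(x)\bigr)$ in the product of \eqref{recur} is nice, and the resulting finite product is therefore nice as well.

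There is no genuine obstacle here: the substantive content has already been packaged into Theorem \ref{a52} and the earlier combinatorial machinery, and the closure properties of $\mathcal{F}$ are tailored precisely to absorb the recursion \eqref{recur}. The only point worth flagging is the substitution step in the inductive argument, which is not listed explicitly among the closure properties of $\mathcal{F}$ but follows at once from them by expanding $P_j(h)$ in terms of addition, subtraction, multiplication, and exponentiation of $h$.
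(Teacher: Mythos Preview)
Your proof is correct and follows precisely the approach intended in the paper: the corollary is stated there as an immediate weakening of Theorem \ref{a52}, using Theorem \ref{thmnice} and closure of $\mathcal{F}$ under finite sums to conclude that $f(x)=\sum_{\sig\in K_A}P_\sig(x)$ is nice. Your optional inductive unpacking of Theorem \ref{thmnice} is accurate but not needed beyond what the paper already records.
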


\section{Further results}
In this paper, we have so far dealt with Galton-Watson trees with $Poisson$ offspring distribution. The results of Sections \ref{sectinf} and \ref{sectionuniv} extend to some other classes of offspring distributions. In this section, we outline briefly these extensions. We consider a general probability distribution $D$ on $\mathbb{N}_{0} = \{0, 1, 2, \ldots\}$, where $p_{i}$ is the probability that a typical node in the random tree has exactly $i$ children, $i \in \mathbb{N}_{0}$. We shall denote the probabilities under this regime by $\Pr_{D}$. We also assume that the moment generating function of $D$ exists on a non-degenerate interval $[0, \gamma]$ on the real line.
\par Fix an arbitrary finite $T_{0}$ of depth $d_{0}$. We assume that $\Pr_{D}[T_{0}] > 0$. In other words, this means that if $T$ is the random Galton-Watson tree with offspring distribution $D$, then $\Pr_{D}[T \cong T_{0}] > 0$. Consider the statement
\begin{equation}
A = \left\{\exists \ v : T(v) \cong T_{0}\right\} \vee \left\{T \text{ is finite}\right\}.
\end{equation}
We can show, similar to our results in Section \ref{sectinf}, that $\Pr_{D}[A] = 1$, provided \eqref{general distribution condition} holds for some $\alpha \in (0, \gamma]$ and $0 < \epsilon < 1$. Of course, the non-trivial case to consider is when $D$ has expectation greater than $1$, as only then does it make sense to talk about the infinite Galton-Watson tree.
\par The proof of this fact follows the exact same steps as shown in Section \ref{sectinf}. We consider again a fictitious continuation $X_{1}, X_{2}, \ldots$ which are i.i.d.\ $D$. For every node $i$, we let $I_{i}$ be the indicator for the event $\{T(i) \cong T_{0}\}$. For a suitable $\epsilon > 0$ that we choose later, we let 
\begin{equation}
Y = \sum_{i=1}^{\left\lfloor \epsilon^{d_{0}} s\right\rfloor} I_{i},
\end{equation}
and we define the martingale $Y_{i} = E[Y|X_{1}, \ldots, X_{i}]$ for $1 \leq i \leq s$, with $Y_{0} = E[Y]$. Defining $g_{1}$ as in Equation \eqref{a23}, we similarly argue that 
\begin{equation}
g_{1}(x) \leq \lfloor x \rfloor + \sum_{i=1}^{\lfloor x \rfloor} X_{i}.
\end{equation}
The only difference is in the estimation of the probability that $g_{1}(\epsilon x)$ exceeds $x$. We employ Chernoff bounds again, but we no longer have the succinct form of the moment generating function as in the case of $Poisson$. For any $0 < \alpha \leq \gamma$, 
\begin{align} \label{general distribution Chernoff}
\Pr[g_{1}(\epsilon x) > x] =& \Pr[e^{\alpha g_{1}(\epsilon x)} > e^{\alpha x}] \nonumber\\
\leq & E[e^{\alpha g_{1}(\epsilon x)}] e^{-\alpha x} \nonumber\\
\leq & E[e^{\alpha (\epsilon x + \sum_{i=1}^{\lfloor \epsilon x \rfloor} X_{i})}] e^{-\alpha x} \nonumber\\
=& e^{\alpha \epsilon x} \prod_{i=1}^{\lfloor \epsilon x \rfloor} E[e^{\alpha X_{i}}] e^{-\alpha x} \nonumber\\
=& \varphi(\alpha)^{\lfloor \epsilon x \rfloor} e^{-\alpha (1 - \epsilon) x},
\end{align}
where $\varphi(\alpha) = E[e^{\alpha X_{1}}]$. Since $X_{1}$ is non-negative valued, $\varphi(\alpha) > 1$ for $\alpha > 0$, hence we can bound the expression in \eqref{general distribution Chernoff} above by 
\begin{equation}
\varphi(\alpha)^{\epsilon x} e^{-\alpha (1 - \epsilon)x} = \left\{\varphi(\alpha)^{\epsilon} e^{-\alpha (1 - \epsilon)}\right\}^{x}.
\end{equation}
If we are able to choose $\alpha > 0$ such that for some $0< \epsilon < 1$, we have 
\begin{equation} \label{general distribution condition}
\varphi(\alpha)^{\epsilon} e^{-\alpha (1 - \epsilon)} < 1,
\end{equation}
then the exact same argument as in Section \ref{sectinf} goes through, and we have the desired result.
\par In particular, it is easy to see that \eqref{general distribution condition} is indeed satisfied when $D$ is a probability distribution on a finite state space $\subseteq \mathbb{N}_{0}$.
\par The sufficient conditions for a tree to be universal nowhere uses the offspring distribution. Once the results of Section \ref{sectinf} hold for a given $D$, it is not too difficult to see that the conclusion of Remark \ref{how neighbourhood of root suffices} should hold in this regime as well. We hope to return to this more general setting in our future work.
\par A further object of future study is a more detailed analysis of $T_{\lambda}$ at the critical value $\lambda = 1$. While $\Pr^{*}$ is technically not defined at the critical value, there may well be some approaches via the insipient infinite tree.

\bibliography{mybibfile}
\nocite{*}

\end{document}